\documentclass[12pt,reqno]{amsart}
\usepackage{amsmath,amssymb,extarrows}
\usepackage{url}
\usepackage{tikz,enumerate}
\usepackage{diagbox}
\usepackage{appendix}
\usepackage{epic}
\usepackage{comment}

\usepackage{longtable} 

\usepackage{float} 

\usepackage{cite}

\usepackage{hyperref}
\usepackage{array}

\usepackage{booktabs}

\allowdisplaybreaks[4]

\usepackage{makecell}
\usepackage{float} 
\usepackage{siunitx} 
\usepackage{multirow}
\usepackage[figuresright]{rotating}

\newtheorem{theorem}{Theorem}

\newtheorem{conj}[theorem]{Conjecture}

\theoremstyle{definition}

\theoremstyle{remark}

\numberwithin{equation}{section}
\numberwithin{theorem}{section}
\numberwithin{defn}{section}

\begin{document}
\title[Modular Nahm Sums for the Inverse Cartan Matrix of Type $D_r$]{Modular Nahm Sums for the Inverse Cartan Matrix of Type $D_r$}

\author{Liuquan Wang and Shangwen Wang}

\address[L.\ Wang]{School of Mathematics and Statistics, Wuhan University, Wuhan 430072, Hubei, People's Republic of China}

\email{wanglq@whu.edu.cn;mathlqwang@163.com}

\address[S.\ Wang]{School of Mathematics and Statistics, Wuhan University, Wuhan 430072, Hubei, People's Republic of China}
\email{whuwsw@whu.edu.cn}

\subjclass[2020]{05A30, 11P84, 33D15, 33D60, 11F03}

\keywords{Nahm sums; Rogers--Ramanujan type identities; Bailey pairs; Cartan matrix}

\begin{abstract}
For $r\geq 3$ we denote by $\mathcal{C}(D_r)$ the Cartan matrix of type $D_r$. Recently, Sun and Wang conjectured a Rogers--Ramanujan type identity for the Nahm sum associated with $\mathcal{C}(D_r)^{-1}$ and the zero vector. They further conjecture that there exist $r-1$ companion modular Nahm sums associated with nonzero vectors. We partially prove this conjecture by constructing $\lfloor (r+4)/2\rfloor$ modular Nahm sums for $\mathcal{C}(D_r)^{-1}$. To prove their modularity, we utilize the method of Bailey pairs to establish various Rogers--Ramanujan type identities. In particular, we confirm their conjectural identity.
\end{abstract}

\maketitle

\section{Introduction}\label{sec-intro}

The famous Rogers--Ramanujan identities \cite{Rogers} express two  $q$-hypergeometric series as some elegant infinite products:
\begin{align}\label{RR}
    \sum_{n=0}^\infty \frac{q^{n^2+in}}{(q;q)_n}=\frac{1}{(q^{i+1},q^{4-i};q^5)_\infty}, \quad i=0,1.
\end{align}
Here and throughout this paper we use $q$-series notations: for $n\in \mathbb{Z}_{\geq 0} \cup \{\infty\}$,
\begin{align}
    (a;q)_n:=\prod\limits_{k=0}^{n-1}(1-aq^k),\quad (a_1,a_2,\dots,a_m;q)_n:=\prod\limits_{k=1}^m (a_k;q)_n.
\end{align}
We always assume $q=e^{2\pi i\tau}$ with $\mathrm{Im}~\tau>0$ so that $|q|<1$. After multiplying by $q^{-1/60}$ for $i=0$ and $q^{11/60}$ for $i=1$, the right side of \eqref{RR} becomes a modular function (of weight zero). This reveals the modularity of the series in the left side. A natural question then arises: what kind of $q$-hypergeometric series are modular? 

An important special case of the above question is Nahm's problem \cite{Nahm1994,Nahm2007}. Let $r$ be a positive integer, $A$ a $r\times r$ positive definite matrix, $B$ a column vector of length $r$ and $C$ a scalar. We define the Nahm sum generated by $(A,B,C)$ as
\begin{align}
    f_{A,B,C}(q):=\sum_{n=(n_1,n_2,\dots,n_r)\in \mathbb{Z}_{\geq 0}^r} \frac{q^{\frac{1}{2}n^\mathrm{T}An+n^\mathrm{T}B+C}}{(q;q)_{n_1}\cdots (q;q)_{n_r}}.
\end{align}
Nahm's problem is to find all such $(A,B,C)$ with rational entries so that $f_{A,B,C}(q)$ is modular. Modular Nahm sums are connected to  rational two-dimensional conformal field theories  (2d CFTs), and have also played important roles in combinatorics and the representation theory of Lie algebras. 

Around 2007, Zagier \cite{Zagier} solved Nahm's problem when the rank $r=1$ by showing that there are exactly seven rank one modular Nahm sums. Zagier also provided many candidates of modular Nahm sums in the rank two and rank three cases. The modularity of these candidates have been confirmed by works of Zagier \cite{Zagier}, Vlasenko--Zwegers \cite{VZ}, Cherednik--Feigin \cite{Feigin}, Wang \cite{Wang-rank2,Wang-rank3} and Cao--Rosengren--Wang \cite{CRW}. Recent works of Cao and Wang \cite{Cao-Wang2024,Cao-Wang2025} provided more rank three and rank four modular Nahm sums. So far, a complete classification of modular Nahm sums seems out of reach.

As a rich source of examples for Nahm's problem, Nahm sums associated with Cartan matrices of Dynkin diagrams are usually modular. For any Dynkin diagram $X$, we denote by $\mathcal{C}(X), r(X)$ and $h(X)$ its Cartan matrix, rank and Coxeter number, respectively.  Inspired by the study of fermionic
sum representations for characters of coset 2d CFTs (see e.g., \cite{Kedem,Kedem-Klassen,Keegan}), the following folklore conjecture (see e.g.,~\cite{Keegan,Lee,SW})  reveals a way to construct modular Nahm sums using Cartan matrices of Dynkin diagrams.
\begin{conj}\label{conj-folklore}
Let $X$ and $Y$ be any Dynkin diagrams of ADET type. The Nahm sums associated with $(M(X,Y),0,C(X,Y))$ is modular where 
\begin{align}
M(X,Y)=\mathcal{C}(X)\otimes \mathcal{C}(Y)^{-1}, \quad C(X,Y)=-\frac{1}{24}\frac{r(X)r(Y)h(X)}{h(X)+h(Y)}.    
\end{align}    
Here $\otimes$ denotes the Kronecker product of matrices.
\end{conj}
From some known Rogers--Ramanujan type identities, this conjecture has been justified for several infinite families. For instance, the Andrews--Gordon identities \cite{Andrews1974} assert that for integers $r,s$ such that $r\geq 1$ and $1\leq s \leq r+1$,
\begin{align}
\sum_{n_1,\dots,n_{r}\geq 0} \frac{q^{N_1^2+\cdots+N_{r}^2+N_s+\cdots +N_{r}}}{(q;q)_{n_1}\cdots (q;q)_{n_{r}}}  =\frac{(q^s,q^{2r+3-s},q^{2r+3};q^{2r+3})_\infty}{(q;q)_\infty} \label{AG}
\end{align}
where $N_j=n_j+\cdots+n_{r}$ ($1\leq j\leq r$) and $N_{r+1}=0$.
The Nahm sum here is associated with the matrix $\mathcal{C}(A_1)\otimes \mathcal{C}(T_r)^{-1}$. In particular, the $s=r+1$ case confirms the above conjecture for $(A_1,T_r)$.
More generally,  the Nahm sum associated with $(A_{2k-1}, T_r)$ is modular for any $k\geq 1$ and $r\geq 1$. This follows from a recent work of  Creutzig and Garner  \cite[Theorem 1.0.1]{CG} which confirms a conjecture of Warnaar and Zudilin \cite[Conjecture 1.1 and Theorem 1.2]{WZ}.

Besides, the conjecture has also been proved for the case $(T_1,T_r)$. The modularity of the Nahm sum associated with $(T_1,T_{r})$ follows from the following identity proved by Stembridge \cite[Corollary 1.5(b)]{Stembridge}:
\begin{align}\label{eq-Stembridge}
    \sum_{n_1,\dots,n_r\geq 0} \frac{q^{\frac{1}{2}(N_1^2+N_2^2+\cdots+N_r^2)}}{(q;q)_{n_1}\cdots (q;q)_{n_r}}=\frac{(-q^{1/2};q)_\infty(q^{(r+1)/2},q^{(r+3)/2},q^{r+2};q^{r+2})_\infty}{(q;q)_\infty}.
\end{align}
When the rank $r$ is even, this identity was proposed as a conjecture by Melzer \cite{Melzer}. Different proofs for the even and odd rank cases were later provided by Bressoud--Ismail--Stanton \cite{Bressoud2000} and Warnaar \cite{Warnaar-2003}, respectively.

Conjecture \ref{conj-folklore} also holds for $(A_1,D_r)$ for any $r\geq 3$. In this case, the matrix part is $\mathcal{C}(A_1)\otimes \mathcal{C}(D_r)=2\mathcal{C}(D_r)^{-1}$. Around 1993,  Kedem, Klassen, McCoy and Melzer \cite[(2.9)]{Kedem} conjectured that for $a=0,1$,
\begin{align}\label{eq-KKMM}
\sum_{\substack{n=(n_1,\dots,n_r)\in \mathbb{N}^r \\ n_{r-1}+n_r\equiv a \!\!\! \pmod{2}}}  \frac{q^{n^\mathrm{T}\mathcal{C}(D_r)^{-1}n}}{(q;q)_{n_1}\cdots (q;q)_{n_r}}=\frac{1}{(q;q)_\infty}\sum_{n \in \mathbb{Z}} q^{r(n+\frac{1}{2}a)^2}.
\end{align}
Flohr, Grabow and Koehn \cite[Eqs.\ (2.28)--(2.30)]{FGK} conjectured three families of similar identities which include \eqref{eq-KKMM} as special cases. All of their conjectural identities were confirmed by Warnaar \cite{Warnaar}, who also proved an additional family of companion identities.  Recently, the authors \cite{Wang-Wang} provided new proofs for these identities and extended \eqref{eq-KKMM} to a general identity which contains two free parameters. 

As for the case $(T_r,T_1)$, Conjecture \ref{conj-folklore} without the formula for $C(T_r,T_1)$ was proposed as a conjecture by Calinescu--Milas--Penn \cite{CMP} when studying twisted modules of principal
subspace vertex algebras. They confirmed the case $r=2$ which coincides with one of Zagier's rank two examples \cite{Zagier}. The Nahm sum associated with $M(T_r,T_1)=\mathcal{C}(T_r)$ is usually called tadpole Nahm sums since it is related to the tadpole Dynkin diagram. The modularity for the cases $r=3$ and $r=4,5$ were confirmed by Milas--Wang \cite{MW24} and Shi--Wang \cite{Shi-Wang}, respectively.  

The cases $(D_r,T_1)$ and $(T_1,D_r)$  ($r\geq 3$) were confirmed only in the cases $r=3$ by Cao and Wang \cite{Cao-Wang2024}. In the case $(D_r,T_1)$, the matrix is $\mathcal{C}(D_r)=(d_{ij})_{r\times r}$ with entries
\begin{equation}
\begin{split}
   & d_{ii}=2, \quad 1\leq i \leq r, \quad d_{i,j}=-1, \quad |i-j|=1,~~ 1\leq i,j\leq r-1, \\
   & d_{r-2,r}=d_{r,r-2}=-1, \quad \text{and} \quad d_{i,j}=0 \quad \text{otherwise}.
\end{split}
\end{equation}
In the case $(T_1,D_r)$, the matrix is $\mathcal{C}(D_r)^{-1}=(a_{ij})_{r\times r}$ with entries
\begin{equation}\label{eq-2D-inverse}
\begin{split}
&a_{r,r-1}=a_{r-1,r}=\tfrac{1}{2}r-1, \quad a_{r-1,r-1}=a_{r,r}=\tfrac{1}{2}r, \\
&a_{i,r-1}=a_{i,r}=a_{r-1,i}=a_{r,i}=i, \quad 1\leq i\leq r-2, \\
&a_{ij}=2\min(i,j), \quad 1\leq i,j\leq r-2.
\end{split}
\end{equation}
Recently, Sun and Wang \cite[Conjecture 1.1]{SW} extended Conjecture \ref{conj-folklore} to Dynkin diagrams of type ABCDEFGT in the context of generalized Nahm sums. Since we will not discuss generalized Nahm sums, we refer the reader to \cite{SW} for details. Sun and Wang also observed some interesting relations between Nahm sums and 2d CFTs. In particular, they considered Nahm sums associated with $(T_1, D_r)$ ($r\geq 3$). In this case, they find that the theory is the effective $\mathcal{N} = 1$ supersymmetric Virasoro minimal model ${\rm SM}_{\mathrm{eff}}(8r + 4, 2)$. For Nahm sums associated with the zero vector, they \cite[(1.12)]{SW} conjectured the following identity revealing relation between the Nahm sum and the fermionic characters:
\begin{align}\label{eq-SW-conj}
f_{\mathcal{C}(D_r)^{-1},0,0}(q)=&\prod_{\substack{n=1\\ n\not\equiv 2 \bmod 4\\ n\not\equiv 0,\, \pm (4r+1) \bmod (8r+4)}}^\infty \big(1-q^{n/2}\big)^{-1} + q^{\frac{r}{2}} \cdot \prod_{\substack{n=1\\ n\not\equiv 2 \bmod 4\\ n\not\equiv 0,\, \pm 1 \bmod (8r+4)}}^\infty \big(1-q^{n/2}\big)^{-1} \nonumber \\
&+ 2q^{\frac{r}{8}}\cdot \prod_{\substack{n=1\\ \text{$n$ odd}}}^\infty \big(1-q^{n}\big)^{-1}  \prod_{\substack{n=1\\ n\not\equiv 0,\, \pm (r+1) \bmod (4r+2)}}^\infty \big(1-q^{n}\big)^{-1}.
\end{align}
They further conjecture that there exist $r-1$ non-zero vectors $B$ so that the associated Nahm sums are modular and have similar expressions in terms of NS and R characters of $\mathrm{SM}_{\rm eff}(8r+4,2)$.

In this paper, we will provide a proof of the conjectural identity \eqref{eq-SW-conj} and thus confirm Conjecture \ref{conj-folklore} for the case $(T_1,D_r)$. Moreover, we make some progress towards the conjecture of Sun and Wang \cite{SW} regarding existence of companion modular Nahm sums. We prove that for each $r\geq 3$, there are $\lfloor (r+4)/2\rfloor$ vectors $B$ such that $f_{\mathcal{C}(D_r)^{-1},B,C}(q)$ is modular for suitable $C$. To state our results in a compact way, for $0<a < m$ we denote
\begin{align}\label{J-defn}
    J_m:=(q^m;q^m)_\infty, \quad J_{a,m}:=(q^a,q^{m-a},q^m;q^m)_\infty.
\end{align}
\begin{theorem}\label{thm-main-1}
For $\lambda \in \{0,1,2,\dots,\lfloor r/2\rfloor\}$, we have
\begin{align}\label{main-1}
 f_{\mathcal{C}(D_{r})^{-1},B_{\lambda},0}(q)&=\frac{(-q^{1/2};q)_{\infty}}{(q;q)_{\infty}} J_{2r+2\lambda+1/2,4r+2}
    -q^{(r+1)/2-\lambda}\frac{(-q^{1/2};q)_{\infty}}{(q;q)_{\infty}}J_{2\lambda-1/2,4r+2} \nonumber \\
    &\qquad+2q^{\frac{r+4\lambda}{8}}\frac{(-q;q)_{\infty}}{(q;q)_{\infty}} J_{r+1-2\lambda,4r+2}
\end{align}
where the vector $B_\lambda \in \mathbb{Q}^r$ is given by
\begin{align}
     n^{\mathrm{T}}B_{\lambda}= \sum_{\ell=r-2\lambda +1}^{r-2}\Big\lfloor \frac{\ell +2\lambda +1-r}{2} \Big\rfloor n_{\ell}+\frac{\lambda}{2}(n_{r-1}+n_{r}). 
\end{align}
As a consequence,  $f_{\mathcal{C}(D_r)^{-1},B_\lambda,C_\lambda}(q)$ is modular with $C_\lambda=\frac{8\lambda^2-4\lambda-r}{8(2r+1)}$.
\end{theorem}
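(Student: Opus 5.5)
The plan is to first rewrite the quadratic form $\frac12 n^{\mathrm T}\mathcal{C}(D_r)^{-1}n$ in terms of partial sums and then split the $(n_{r-1},n_r)$ part off. Set $N_i=n_i+\cdots+n_{r-2}$ for $1\le i\le r-2$. From \eqref{eq-2D-inverse} one gets
\[
\tfrac12 n^{\mathrm T}\mathcal{C}(D_r)^{-1}n=N_1^2+\cdots+N_{r-2}^2+(n_{r-1}+n_r)\bigl(N_1+\cdots+N_{r-2}\bigr)/1\cdot\tfrac12\cdot 2/2+\tfrac r8(n_{r-1}+n_r)^2-\tfrac12 n_{r-1}n_r ,
\]
where the middle term is $\tfrac12(n_{r-1}+n_r)\sum_{i\le r-2} i\,n_i=\tfrac12(n_{r-1}+n_r)\sum_i N_i$.
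So I introduce $s=n_{r-1}+n_r$ and sum over the pair $(n_{r-1},n_r)$ with $s$ fixed first. This inner sum is
\[
\sum_{a+b=s}\frac{q^{-ab/2}}{(q;q)_a(q;q)_b},
\]
a finite $q$-Chu--Vandermonde type sum with $q\to q^{-1}$ features. It evaluates (after $q^{s^2/4}$ normalisation) to something like $(-q^{1/2};q^{1/2})_s/(q;q)_s$ weighted by $q^{-s^2/4}$ terms, up to the standard Gauss/Cauchy binomial identity. This reduces the rank-$r$ sum to a rank-$(r-1)$ sum of Andrews--Gordon shape:
\[
\sum_{s,n_1,\dots,n_{r-2}} \frac{q^{\sum N_i^2+\frac s2\sum N_i+\frac{r-2}{8}s^2+\text{linear}}\,(-q^{1/2};q^{1/2})_s\text{-type factor}}{(q;q)_s(q;q)_{n_1}\cdots(q;q)_{n_{r-2}}}.
\]
The linear term $n^{\mathrm T}B_\lambda$ becomes $\sum_{i> r-2\lambda}N_i$-type shifts on the last $\sim 2\lambda$ partial sums plus $\frac{\lambda}{2}s$.

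Next I would treat the $N$-sum with $s$ fixed as the iterated Bailey chain. The quadratic form $\sum N_i^2+\frac s2 \sum N_i$ is precisely what the Bailey lemma produces when it is applied $r-2$ times, with $\rho_1,\rho_2\to\infty$, starting from a seed pair. The linear terms attached to $B_\lambda$ (the floor function pattern, increasing by one every two steps) suggest alternating between the standard Bailey lemma and a variant: either the Bressoud--Ismail--Stanton change of base $q\to q^{1/2}$ step, or Warnaar's $D$-type lemma that inserts a factor $q^{N}$. I would pick a Bailey pair relative to $a=1$ (or $a=q$), indexed by $s$, whose $\beta_s$ matches the $s$-summand after the Chu--Vandermonde step. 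Natural candidates are the pairs underlying \eqref{eq-KKMM} from the authors' earlier work \cite{Wang-Wang}. Iterating $r-2$ times then yields
\[
\frac{1}{(q;q)_\infty}\sum_{n}q^{(\text{quadratic in } n)}\alpha_n ,
\]
and Jacobi triple product converts this into theta functions $J_{a,4r+2}$.

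The final step is to match the result with \eqref{main-1}. Here I expect the $\alpha_n$ to split by a parity or residue condition into three pieces. Two of these pieces, combined with the factor $(-q^{1/2};q)_\infty$ coming from the $s$-sum, give $J_{2r+2\lambda+1/2,4r+2}$ and $J_{2\lambda-1/2,4r+2}$. The remaining piece, coming from the $s$ odd / Ramond sector, gives $2q^{(r+4\lambda)/8}(-q;q)_\infty J_{r+1-2\lambda,4r+2}$. The factor $2$ there is natural: the symmetry $n_{r-1}\leftrightarrow n_r$ doubles the R-sector contribution.

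Modularity with $C_\lambda$ then follows because each term is an eta quotient times a weight-$1/2$ theta function with the same $q$-exponent offset. This can be checked by computing $q^{C_\lambda}$ times each term.

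The main obstacle is the middle step: finding the correct seed Bailey pair and the correct sequence of lemma variants so that the $\lambda$-dependent linear terms, which only turn on for $\ell>r-2\lambda$, come out right. My first attempt will be to handle the last $2\lambda$ steps by pairs of ``shifted'' Bailey lemmas, treating $\lambda=0$ first and proving it completely. I would then induct on $\lambda$, using the recursive structure of $\lfloor(\ell+2\lambda+1-r)/2\rfloor$.
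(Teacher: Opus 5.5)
There is a genuine gap, and it starts with your first reduction.

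\textbf{The reduction is wrong.} With the correct normalisation one has $\tfrac12 n^{\mathrm T}\mathcal{C}(D_r)^{-1}n=\tfrac12\sum_{i=1}^{r-2}(N_i+\tfrac s2)^2+\tfrac14(n_{r-1}^2+n_r^2)$. (The entries in \eqref{eq-2D-inverse} are actually those of $2\mathcal{C}(D_r)^{-1}$; \eqref{eq-quadratic} is the right formula.) So the coefficient of $\sum N_i^2$ is $\tfrac12$, not $1$.

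More seriously, the inner sum $\sum_{a+b=s}q^{-ab/2}/((q;q)_a(q;q)_b)$ has no uniform product formula in $s$; for $s=1$ it is $2/(1-q)$. By \eqref{Andrews1} and \eqref{Andrews2} it equals $q^{-m^2/2}(-q^{1/2};q)_m^2/(q;q)_{2m}$ when $s=2m$, and $2q^{-(m^2+m)/2}(-q;q)_m^2/(q;q)_{2m+1}$ when $s=2m+1$. So the parity split must be made at the outset, not at the end.

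Since $N_i+s/2$ is half-integral for odd $s$, the chain variables have to be $s_i=N_i+\lfloor s/2\rfloor$, and the seed pair is indexed by $m=\lfloor s/2\rfloor$ rather than by $s$. The exponent becomes $\tfrac12\sum_{i=1}^{r-1}s_i^2$, plus $\tfrac12\sum_i s_i$ in the odd sector. That calls for $r-1$ steps of the $q^{k^2/2}$-type lemmas \eqref{S2}--\eqref{S6}. By contrast, a pair indexed by $s$ pushed through the standard lemma with $\rho_1,\rho_2\to\infty$ would produce $\sum_i(N_i+s)^2$.

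The two sectors need two different chains: one relative to $a=1$ with $(-q^{1/2};q)$-factors, and one relative to $a=q$ with $(-q;q)$-factors. This is where the two different prefactors $(-q^{1/2};q)_\infty$ and $(-q;q)_\infty$ in \eqref{main-1} come from. They cannot come out of a parity split of a single $\alpha_n$, as your last step assumes.

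\textbf{The main step is left unspecified.} What you flag as the main obstacle is the actual content of the proof, and the proposal gives no concrete mechanism for it. In the paper:
\begin{itemize}
\item The even-sector factor is written as $(-q^{1/2};q)_m\beta_m$ with Slater's pair \eqref{470}, where $\beta_n=1/((q^{1/2};q)_n(q^2;q^2)_n)$. The odd-sector factor uses \eqref{C3}.
\item The pair is lifted to $a=q$ (resp.\ $a=q^2$) by \eqref{eq-BP-lift}.
\item The linear terms $s_{r-2\lambda+1}+s_{r-2\lambda+3}+\cdots+s_{r-1}$ coming from $B_\lambda$ are then generated by applying \eqref{S3} and \eqref{S4} (resp.\ \eqref{S6} and \eqref{S5}) alternately, $\lambda$ times each.
\item One returns to $a=1$ (resp.\ $a=q$) via \eqref{eq-BP-reduce} and applies \eqref{S2} $r-2\lambda-2$ more times, followed by its limiting form.
\item The Jacobi triple product finishes the computation.
\end{itemize}
The endpoint $r=2k$, $\lambda=k$ leaves no room for the reduction and the \eqref{S2} steps. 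It is closed directly with the limiting forms of \eqref{S3} at $a=q$ and \eqref{S6} at $a=q^2$. No induction on $\lambda$ is involved.

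As it stands, your outline names the general toolkit, but the reduction it starts from is incorrect and the chain that actually works is not specified.
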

In particular, when $\lambda=0$, we confirm the conjectural identity \eqref{eq-SW-conj}. Since $r(T_1)=1$, $r(D_r)=r$, $h(T_1)=3$ and $h(D_r)=2r-2$, Conjecture \ref{conj-folklore} predicates that we need $C=C(T_1,D_r)=-\frac{r}{8(2r+1)}$ so that $f_{\mathcal{C}(D_r)^{-1},0,C}(q)$ is modular. The value of $C$ is exactly $C_0$. Hence the $\lambda=0$ case of Theorem \ref{thm-main-1} also confirms Conjecture \ref{conj-folklore} for the $(T_1,D_r)$ case.

Meanwhile, we find the following companion modular Nahm sums.
\begin{theorem}\label{thm-main-2}
Let
\begin{align}
   B^{(0)}&=\frac{1}{2}(1,0,1,0,\dots,1,0,0,1)^{\mathrm{T}}\in \mathbb{Q}^{2k}, \\
   B^{(1)}&=\frac{1}{4}(-2,0,-2,0,\cdots,-2,0,-2,1,-1)^\mathrm{T} \in \mathbb{Q}^{2k+1}.  
\end{align}
We have
\begin{align}
 f_{\mathcal{C}(D_{2k})^{-1},B^{(0)},0}(q)&=\frac{(-q;q)_\infty}{(q;q)_\infty}J_{4k+1,8k+2} +q^{\frac{1}{4}k}\frac{(-q^{1/2};q)_\infty}{(q;q)_\infty}J_{2k+\frac{1}{2},8k+2}, \label{eq-thm2-even} \\
     f_{\mathcal{C}(D_{2k+1})^{-1},B^{(1)},0}(q)&=2\frac{(-q;q)_\infty}{(q;q)_\infty}J_{4k+2,8k+6} +q^{\frac{2k-1}{8}}\frac{(-q^{{1}/{2}};q)_\infty}{(q;q)_\infty}J_{2k+\frac{5}{2},8k+6}  \nonumber \\
     &\qquad +q^{\frac{2k+3}{8}}\frac{(-q^{{1}/{2}};q)_\infty}{(q;q)_\infty}J_{2k+\frac{1}{2},8k+6}. \label{eq-thm2-odd}
\end{align}
As a consequence, $f_{\mathcal{C}(D_{2k+i})^{-1},B^{(i)},C^{(i)}} (q)$ ($i=0,1$) is modular with $C^{(0)}=0$ and $C^{(1)}=\frac{1}{4(4k+3)}$.
\end{theorem}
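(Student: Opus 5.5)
We reduce the $r$-fold Nahm sum to a short sum in two variables and then evaluate that sum with Bailey-type identities.

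\textbf{Step 1: change of variables.} Write the quadratic form $n^{\mathrm T}\mathcal{C}(D_r)^{-1}n$ using \eqref{eq-2D-inverse}. Put $N_j=n_j+\cdots+n_{r-2}$ for $1\le j\le r-2$, and let $a=n_{r-1}$, $b=n_r$. Then
\[
\tfrac12 n^{\mathrm T}\mathcal{C}(D_r)^{-1}n=\sum_{j=1}^{r-2}N_j^2+N_1(a+b)+\tfrac r4(a^2+b^2)+\big(\tfrac r2-1\big)ab .
\]
This follows because $\sum_{i,j\le r-2}\min(i,j)n_in_j=\sum_j N_j^2$ and $\sum_{i\le r-2} i\,n_i=N_1$.

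\textbf{Step 2: the inner sum over $n_1,\dots,n_{r-2}$.} For fixed $a,b$, this inner sum is an Andrews--Gordon type multisum with the linear term $N_1(a+b)$. The vectors $B^{(0)}$ and $B^{(1)}$ contribute extra linear terms in the $N_j$ that alternate with the parity of $j$. We would write the inner sum as a Bailey chain. Start from a known Bailey pair relative to $q^{a+b}$, or relative to $1$ after shifting $N_1$. Iterate the Bailey lemma $r-2$ times. The alternating half-integer linear terms correspond to inserting, at alternate steps, the Bailey lemma with parameters $\rho_1=-q^{1/2}$ (or $\rho_1=q^{1/2}$) and $\rho_2\to\infty$. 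This is the mechanism behind Bressoud-type identities with $q^{N_j^2/2}$ mixed with $q^{N_j^2}$.

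The chain should turn the inner sum into a single sum of the form
\[
\frac{1}{(q;q)_{\infty}}\sum_{m}\frac{q^{Q(m)}\,(\text{factors depending on }a+b)}{(q;q)_{m-?}(q;q)_{m+a+b}} .
\]
In effect we get $\sum_{n}q^{\text{quadratic}}/(q)_{?}$ after inserting the standard pair $\beta_n=1/(q)_n$.

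\textbf{Step 3: the sum over $a,b$.} Set $s=a+b$ and $d=a-b$, so the exponent $\tfrac r4(a^2+b^2)+(\tfrac r2-1)ab$ becomes $\tfrac{r-1}{2}\cdot\tfrac{s^2}{2}+\tfrac{d^2}{4}$. For fixed $s$, the sum over $d$ with weight $1/((q)_a(q)_b)$ is evaluated by the $q$-binomial theorem. This leaves a single-index Bailey-pair sum over $s$ of the shape $\beta_s$ with $q^{d^2/4}$-type entries. Splitting by the parity of $s$ produces the two types of terms in the answer: the $(-q^{1/2};q)_\infty$ terms (NS sector) and the $(-q;q)_\infty$ terms (R sector). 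Then:
\begin{itemize}
\item identify the resulting $\alpha_n$;
\item collapse the series with the Jacobi triple product into theta functions $J_{c,8k+2}$ or $J_{c,8k+6}$;
\item match the prefactors to the stated ones.
\end{itemize}
Modularity then follows from the product side: each $J_{a,m}$, $(-q;q)_\infty$ and $(-q^{1/2};q)_\infty$ is an eta quotient or theta function times a power of $q$. We check that the three (or two) terms carry the same shift $C^{(i)}$ by comparing the $q$-exponents of their leading terms.

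\textbf{Main obstacle.} The hard step is Step 2, together with its interaction with Step 3. We need the right Bailey pair and the right sequence of Bailey-lemma specializations so that the alternating pattern of $B^{(i)}$ is absorbed exactly, and so that the dependence on $s=a+b$ survives in a form the Step 3 sum can evaluate. The odd case $B^{(1)}$, with its asymmetric last two entries $\tfrac14,-\tfrac14$, is the more delicate one. First we would try using the paper's Theorem~\ref{thm-main-1} machinery with $\lambda$-type pairs; that is, we would reuse the same Bailey pairs and modify only the final step.
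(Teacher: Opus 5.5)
There is a genuine gap. Step 2, which carries the whole proof, is not an argument; you say yourself that the right Bailey pairs and specializations are unknown. Step 1 also sets it up on a wrong quadratic form.

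\textbf{The quadratic form.} Your identity $\sum_{i\le r-2} i\,n_i=N_1$ is false: in fact $\sum_{i\le r-2} i\,n_i=\sum_{j=1}^{r-2}N_j$, so $n_{r-1}+n_r$ couples to every $N_j$, not only to $N_1$. In addition, the matrix displayed in \eqref{eq-2D-inverse} has entries twice those of $\mathcal{C}(D_r)^{-1}$ (compare \eqref{eq-quadratic}). The exponent the theorem is about is therefore
\[
\tfrac12 n^{\mathrm T}\mathcal{C}(D_r)^{-1}n=\tfrac12\sum_{j=1}^{r-2}\Big(N_j+\tfrac{n_{r-1}+n_r}{2}\Big)^2+\tfrac14\big(n_{r-1}^2+n_r^2\big),
\]
which is half of your (corrected) expression. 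Both points are essential.
\begin{itemize}
\item The finite sum over $t=\frac12(n_{r-1}-n_r)$ is a product only when its weight is $q^{t^2/2}$, up to linear terms. With your $q^{d^2/4}=q^{t^2}$ it is not: already $\sum_t q^{t^2}/((q;q)_{1-t}(q;q)_{1+t})=(1+3q)/((1-q)(1-q^2))$.
\item The coupling to all $N_j$ is exactly what lets the $(n_{r-1},n_r)$-block be absorbed at the bottom of a single Bailey chain.
\end{itemize}
Your ordering instead evaluates the $N$-sum at fixed $a,b$ via a pair relative to $q^{a+b}$, then sums over $s=a+b$. That leaves a double sum over $s$ and the Bailey index, involving $\alpha_n(q^{s};q)$, and you offer no way to evaluate it.

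\textbf{The missing idea is the paper's ordering.}
\begin{enumerate}
\item Split by the parity of $n_{r-1}+n_r$. Put $n_{r-1}=s_{r-1}+t$, $n_r=s_{r-1}-t$ (resp.\ $n_{r-1}=s_{r-1}+t+1$), and $s_j=N_j+s_{r-1}$. The exponent becomes $\frac12\sum_{j\le r-1}s_j^2+\frac12t^2$ plus linear terms and a constant.
\item Do the $t$-sum \emph{first}, with \eqref{Andrews1} resp.\ \eqref{Andrews2}. For both $B^{(0)}$ and $B^{(1)}$ this gives $(-1;q)_s\beta_s$ for Slater's pair \eqref{C1} in the even class. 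In the odd class it gives, up to a power of $q$, $(-q^{1/2};q)_s\beta_s$ for the modified G(2) pair \eqref{BP-new}.
\item The alternating linear terms coming from $B^{(i)}$ then match alternating steps \eqref{S6}/\eqref{S5} with $a=1$, resp.\ \eqref{S3-aq}/\eqref{S4-aq} with $a=q$. So $\rho_1\in\{-1,-q\}$ or $\{-q^{1/2},-q^{3/2}\}$; $\rho_1=q^{1/2}$ plays no role. The extra factors telescope, and each pair of steps just multiplies $\alpha_n$ by $q^{n^2}$ resp.\ $q^{n^2+n}$.
\item For odd rank, one unpaired step precedes the outermost limit. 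The matching limits are \eqref{S6-a1-infinite}, \eqref{S3-aq-infinite} for even rank and \eqref{S5-a1-infinite}, \eqref{S4-aq-infinite} for odd rank.
\item The Jacobi triple product then produces the theta functions.
\end{enumerate}

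\textbf{Modularity.} The common shift $C^{(i)}$ is not found by comparing leading exponents. You get it by computing, term by term, the powers of $q$ in $q^{m/24}J_m$ and $q^{\frac m{24}+\frac m2P_2(a/m)}J_{a,m}$ and checking they agree.
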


Theorems \ref{thm-main-1} and \ref{thm-main-2} provide $\lfloor (r+4)/2\rfloor$ different modular Nahm sums for the matrix $\mathcal{C}(D_r)^{-1}$. It remains open to find $\lfloor (r-3)/2\rfloor$ other modular Nahm sums for $\mathcal{C}(D_r)^{-1}$ to fully prove the aforementioned conjecture of Sun and Wang.  We are not sure whether the results in Theorem \ref{thm-main-2} can be extended to a general family of modular Nahm sums similar to Theorem \ref{thm-main-1}.  

The rest of this paper is organized as follows. In Section \ref{sec-pre} we collect some auxiliary identities and review some knowledge from the theory of Bailey pairs. We present proofs for Theorems \ref{thm-main-1} and \ref{thm-main-2} in Section \ref{sec-proof}.

\section{Preliminaries}\label{sec-pre}
Recall the Jacobi triple product identity \cite[Theorem 2.8]{Andrews1998}
\begin{align}\label{jtp}
    \sum_{n=-\infty}^\infty x^nq^{n^2}=(-qx,-q/x,q^2;q^2)_\infty, \quad x\neq 0.
\end{align}
As some finite versions of it, we have the following identities  \cite[Eqs.~(4.3) and (4.10)]{Andrews2016}:
\begin{align}
    \sum_{i=-n}^n \frac{(-1)^iq^{i(i-1)/2}x^{-i}}{(q;q)_{n-i}(q;q)_{n+i}}&=\frac{(x^{-1};q)_n(xq;q)_n}{(q;q)_{2n}}, \label{Andrews1} \\
    \sum_{i=-n-1}^n \frac{(-1)^iq^{i(i-1)/2}x^{-i}}{(q;q)_{n-i}(q;q)_{n+i+1}}&=\frac{(x^{-1};q)_n(xq;q)_{n+1}}{(q;q)_{2n+1}}. \label{Andrews2}
\end{align}

A pair of sequences $(\alpha_n(a;q),\beta_n(a;q))$ is called a Bailey pair relative to the parameter $a$ if for all $n\geq 0$,
 \begin{align}\label{defn-BP}
     \beta_n(a;q)=\sum_{k=0}^n\frac{\alpha_k(a;q)}{(q;q)_{n-k}(aq;q)_{n+k}}.
 \end{align}
For the remainder of this section, we assume that $(\alpha_n(a;q),\beta_n(a;q))$ is a Bailey pair relative to $a$. As direct consequences of the Bailey lemma (see, e.g.,~ \cite[Theorem 2.1]{Warnaar-2001}), we obtain a Bailey pair $(\alpha_n',\beta_n')$ from the Bailey pair $(\alpha_n,\beta_n)$ through the following formulas (see e.g.,~\cite[(S1)(S2)(S3)(S4)(S5)(S6)]{Bressoud2000}):
\begin{equation}\label{BP-S1}
\begin{split}
\alpha_n'(a;q)&=a^nq^{n^2}\alpha_n(a;q), \\
\beta_n'(a;q)&=\sum_{k=0}^n \frac{a^kq^{k^2}}{(q;q)_{n-k}}\beta_k(a;q);
\end{split}\tag{S1}
\end{equation}
\begin{equation}\label{S2}
\begin{split}
    \alpha_n'(a;q)&=a^{n/2}q^{n^2/2}\alpha_n(a;q), \\
    \beta_n'(a;q)&=\sum_{k=0}^n\frac{(-\sqrt{aq};q)_k}{(q;q)_{n-k}(-\sqrt{aq};q)_n}a^{k/2}q^{k^2/2}\beta_k(a;q);
\end{split} \tag{S2}
\end{equation}
\begin{equation}\label{S3}
\begin{split}
\alpha_n'(a;q)&=\frac{(-q^{1/2};q)_n}{(-aq^{1/2};q)_n}a^nq^{n^2/2}\alpha_n(a;q), \\
    \beta_n'(a;q)&=\sum_{k=0}^n \frac{(-q^{1/2};q)_k}{(q;q)_{n-k}(-aq^{1/2};q)_n}a^kq^{k^2/2}\beta_k(a;q);
\end{split}\tag{S3}
\end{equation}
\begin{equation}\label{S4}
    \begin{split}
        \alpha_n'(a;q)&=\frac{(-aq^{1/2};q)_n}{(-q^{1/2};q)_n}q^{n^2/2}\alpha_n(a;q), \\
        \beta_n'(a;q)&=\sum_{k=0}^n\frac{(-aq^{1/2};q)_k}{(q;q)_{n-k}(-q^{1/2};q)_n}q^{k^2/2}\beta_k(a;q);
    \end{split}\tag{S4}
\end{equation}
\begin{equation}\label{S5}
    \begin{split}
        \alpha_n'(a;q)&=\frac{(-a^{1/2}q;q)_n}{(-a^{1/2};q)_n}a^{n/2}q^{(n^2-n)/2}\alpha_n(a;q), \\
        \beta_n'(a;q)&=\sum_{k=0}^n \frac{(-a^{1/2}q;q)_k}{(q;q)_{n-k}(-a^{1/2};q)_n}a^{k/2}q^{(k^2-k)/2}\beta_k(a;q);
    \end{split}\tag{S5}
\end{equation}
\begin{equation}\label{S6}
    \begin{split}
        \alpha_n'(a;q)&=\frac{(-a^{1/2};q)_n}{(-a^{1/2}q;q)_n}a^{n/2}q^{(n^2+n)/2}\alpha_n(a;q), \\
        \beta_n'(a;q)&=\sum_{k=0}^n\frac{(-a^{1/2};q)_k}{(q;q)_{n-k}(-a^{1/2}q;q)_n}a^{k/2}q^{(k^2+k)/2}\beta_k(a;q).
    \end{split}\tag{S6}
\end{equation}

When $n\rightarrow \infty$, we deduce from \eqref{defn-BP} and \eqref{BP-S1} that
\begin{align}\label{eq-BP-id-key}
\sum_{n=0}^\infty a^nq^{n^2}\beta_n(a;q)=\frac{1}{(aq;q)_\infty} \sum_{n=0}^\infty a^n q^{n^2}\alpha_n(a;q).
\end{align}

If we set $a=1$ in \eqref{S2}, then we obtain a Bailey pair
\begin{equation}\label{BP-S2-a1}
\begin{split}
    \alpha_n'(1;q)&=q^{n^2/2}\alpha_n(1;q), \\
    \beta_n'(1;q)&=\sum_{k=0}^n \frac{(-q^{1/2};q)_kq^{k^2/2}}{(q;q)_{n-k}(-q^{1/2};q)_n}\beta_k(1;q).
\end{split}
\end{equation}
Let $n\rightarrow \infty $. We obtain the transformation formula
\begin{align}\label{BP-S2-a1-infinite}
    \sum_{n=0}^\infty (-q^{1/2};q)_nq^{n^2/2}\beta_n(1;q)=\frac{(-q^{1/2};q)_\infty}{(q;q)_\infty}\sum_{n=0}^\infty q^{n^2/2}\alpha_n(1;q).
\end{align}

If we set $a=q$ in \eqref{S2}, then we obtain a Bailey pair
\begin{equation}\label{BP-S2-aq}
\begin{split}
    \alpha_n'(q;q)&=q^{n(n+1)/2}\alpha_n(q;q), \\
    \beta_n'(q;q)&=\sum_{k=0}^n \frac{(-q;q)_kq^{k(k+1)/2}}{(q;q)_{n-k}(-q;q)_n}\beta_k(q;q).
\end{split}
\end{equation}
Let $n\rightarrow \infty $. We obtain the transformation formula
\begin{align}\label{BP-S2-aq-infinite}
    \sum_{n=0}^\infty (-q;q)_nq^{n(n+1)/2}\beta_n(q;q)=\frac{(-q;q)_\infty}{(q^2;q)_\infty}\sum_{n=0}^\infty q^{n(n+1)/2}\alpha_n(q;q).
\end{align}

If we set $a=q$ in \eqref{S3}, then we obtain a Bailey pair
\begin{equation}\label{S3-aq}
\begin{split}
    \alpha_n'(q;q)&=\frac{1+q^{\frac{1}{2}}}{1+q^{n+\frac{1}{2}}}q^{\frac{1}{2}n^2+n}\alpha_n(q;q), \\
    \beta_n'(q;q)&=\sum_{k=0}^n \frac{(-q^{\frac{1}{2}};q)_k}{(q;q)_{n-k}(-q^{\frac{3}{2}};q)_n}q^{\frac{1}{2}k^2+k}\beta_k(q;q).
\end{split}
\end{equation}
Let $n\rightarrow \infty$. We obtain
\begin{align}\label{S3-aq-infinite}
\sum_{n=0}^\infty q^{\frac{1}{2}n^2+n} (-q^{\frac{1}{2}};q)_n\beta_n(q;q)=\frac{(-q^{\frac{1}{2}};q)_\infty}{(q^2;q)_\infty} \sum_{n=0}^\infty \frac{q^{\frac{1}{2}n^2+n}}{1+q^{n+\frac{1}{2}}}\alpha_n(q;q).
\end{align}

If we set $a=q$ in \eqref{S4}, then we obtain a Bailey pair
\begin{equation}\label{S4-aq}
    \begin{split}
\alpha_n'(q;q)&=\frac{1+q^{n+\frac{1}{2}}}{1+q^{\frac{1}{2}}}q^{\frac{1}{2}n^2}\alpha_n(q;q), \\
\beta_n'(q;q)&=\sum_{k=0}^n\frac{(-q^{\frac{3}{2}};q)_k}{(q;q)_{n-k}(-q^{\frac{1}{2}};q)_n}q^{\frac{1}{2}k^2}\beta_k(q;q).
    \end{split}
\end{equation}
Let $n\rightarrow \infty$. We obtain
\begin{align}\label{S4-aq-infinite}
    \sum_{n=0}^\infty q^{\frac{1}{2}n^2}(-q^{\frac{3}{2}};q)_n\beta_n(q;q)=\frac{(-q^{\frac{1}{2}};q)_\infty}{(q^2;q)_\infty} \sum_{n=0}^\infty \frac{1+q^{n+\frac{1}{2}}}{1+q^{\frac{1}{2}}}q^{\frac{1}{2}n^2}\alpha_n(q;q).
\end{align}

If we set $a=1$ in \eqref{S5}, then we obtain a Bailey pair:
\begin{equation}\label{S5-special}
\begin{split}
   \alpha_n'(1;q)&=\frac{(-q;q)_n}{(-1;q)_n}q^{(n^2-n)/2}\alpha_n(1;q), \\
    \beta_n'(1;q)&=\sum_{k=0}^n\frac{(-q;q)_k}{(q;q)_{n-k}(-1;q)_n}q^{(k^2-k)/2}\beta_k(1;q).
\end{split}
\end{equation}
Let $n\rightarrow \infty$. We deduce that
\begin{align}\label{S5-a1-infinite}
\sum_{n=0}^\infty q^{(n^2-n)/2}(-q;q)_n\beta_n(1;q)=\frac{(-1;q)_\infty}{(q;q)_\infty}\sum_{n=0}^\infty \frac{(-q;q)_n}{(-1;q)_n}q^{(n^2-n)/2}\alpha_n(1;q).
\end{align}

If we set $a=1$ in \eqref{S6}, then we obtain a Bailey pair:
\begin{equation}\label{S6-special}
    \begin{split}
        \alpha_n'(1;q)&=\frac{(-1;q)_n}{(-q;q)_n}q^{(n^2+n)/2}\alpha_n(1;q), \\
        \beta_n'(1;q)&=\sum_{k=0}^n\frac{(-1;q)_k}{(q;q)_{n-k}(-q;q)_n}q^{(k^2+k)/2}\beta_k(1;q).
    \end{split}
\end{equation}
Let $n\rightarrow \infty$. We obtain
\begin{align}\label{S6-a1-infinite}
    \sum_{n=0}^\infty q^{(n^2+n)/2}(-1;q)_n\beta_n(1;q)=\frac{(-q;q)_\infty}{(q;q)_\infty}\sum_{n=0}^\infty \frac{(-1;q)_n}{(-q;q)_n}q^{(n^2+n)/2}\alpha_n(1;q).
\end{align}

For our proofs we also need to construct Bailey pairs with different parameters from a given Bailey pair. Suppose that $(\alpha_n(a;q),\beta_n(a;q))$ is a Bailey pair relative to $a$. From  \cite[(2.4) and (2.5)]{Lovejoy2004} we know that $(\alpha_n',\beta_n')$ is a Bailey pair relative to $aq$ where
\begin{equation}\label{eq-BP-lift}
\begin{split}
&\alpha_n'(aq;q)=\frac{(1-aq^{2n+1})a^nq^{n^2}}{1-aq}\sum_{k=0}^n a^{-k}q^{-k^2}\alpha_k(a;q), \\
&\beta_n'(aq;q)=\beta_n(a;q).
\end{split}
\end{equation}
Moreover, from \cite[Theorem 3.2]{Warnaar-2001} we know that $(\widetilde{\alpha}_n,\widetilde{\beta}_n)$ is a Bailey pair relative to $a/q$ where
\begin{align}\label{eq-BP-reduce}
\widetilde{\alpha}_0(a/q;q)&=\alpha_0(a;q), \quad \widetilde{\alpha}_n(a/q;q)=(1-a)\Big(\frac{\alpha_n(a;q)}{1-aq^{2n}}-\frac{aq^{2n-2}\alpha_{n-1}(a;q)}{1-aq^{2n-2}}   \Big), \nonumber \\
\widetilde{\beta}_n(a/q;q)&=\beta_n(a;q).
\end{align}
These two transformation formulas played important roles in proving Nahm sum identities via Bailey pairs (see e.g.,~\cite[(2.40), Lemma 2.3]{Cao-Wang2024} and \cite[Lemma 2.1]{Wang-Wang}).

\section{Proofs of the theorems}\label{sec-proof}
 Let $P_2(t)=t^2-t+\frac{1}{6}$ be the second periodic Bernoulli polynomial. It is well known that $q^{\frac{m}{24}}J_m$ and $q^{\frac{m}{24}+\frac{m}{2}P_2(\frac{a}{m})} J_{a,m}$ are modular forms of weight $1/2$ (see \eqref{J-defn}). Hence the last assertions in Theorems \ref{thm-main-1} and \ref{thm-main-2} follow easily from \eqref{main-1}, \eqref{eq-thm2-even}, \eqref{eq-thm2-odd} and direct calculations. Therefore, it suffices to prove the identities in these theorems.  We will prove them using the Bailey pair method. The initial steps of our proofs are inherited from our previous work \cite{Wang-Wang}.
\begin{proof}[Proof of Theorem \ref{thm-main-1}]
Note that
\begin{align}\label{eq-quadratic}
&n^\mathrm{T}\mathcal{C}(D_r)^{-1}n=\sum_{i=1}^{r-2} in_i^2+\frac{r}{4}(n_{r-1}^2+n_r^2)+2\sum_{1\leq i <j \leq r-2}in_in_j  \nonumber \\
&\quad \quad +\sum_{i=1}^{r-2}in_i(n_{r-1}+n_r) +(\frac{1}{2}r-1)n_{r-1} n_r \nonumber \\
&=\big(n_1+n_2+\cdots+n_{r-2}+\frac{n_{r-1}+n_r}{2}\big)^2+\cdots+\big(n_{r-2}+\frac{n_{r-1}+n_r}{2}\big)^2 \nonumber \\
&\quad \quad +\frac{1}{2}(n_{r-1}^2+n_r^2).
\end{align}
(1) We first consider the cases $\lambda \in \{0,1,2,\dots,\lfloor (r-1)/2\rfloor\}$. 

According to the parity of $n_{r-1}+n_r$, we divide our discussions into two cases.

\textbf{Case 1.} If $n_{r-1}+n_r$ is even, then we write 
\begin{align}\label{nk-even}
n_{r-1}=s_{r-1}+t, ~ n_{r}=s_{r-1}-t, ~ -s_{r-1}\leq t \leq s_{r-1}, ~s_{r-1}\in \mathbb{N}, 
\end{align}
and we introduce new variables:
\begin{equation}\label{eq-variable}
\begin{split}
n_1+n_2+\cdots +n_{r-2}+s_{r-1}&=s_1, \\
n_2+\cdots+n_{r-2}+s_{r-1}&=s_2, \\
\cdots  &\\
n_{r-2}+s_{r-1}&=s_{r-2}.
\end{split}
\end{equation}
In other words, we have
\begin{align}\label{eq-variable-relation}
n_1=s_1-s_2,~ n_2=s_2-s_3, ~\ldots,~  n_{r-2}=s_{r-2}-s_{r-1}.
\end{align}

From \eqref{eq-quadratic} and \eqref{nk-even} we have
\begin{align}\label{eq-main-even}
&\frac{1}{2}n^{\mathrm{T}}\mathcal{C}(D_{r})^{-1}n+\sum_{\ell=r-2\lambda +1}^{r-2}\Big\lfloor \frac{\ell +2\lambda +1-r}{2} \Big\rfloor n_{\ell}+\frac{\lambda}{2}(n_{r-1}+n_{r})\nonumber\\
&=\frac{1}{2}(s_1^2+s_2^2+\cdots+s_{r-2}^2+s_{r-1}^2+t^2)+\sum_{\ell=0}^{\lambda -1}s_{r-2\lambda+1+2\ell}.
\end{align}
Then
\begin{align}\label{main-S0-start}
    &S_0(q)\nonumber\\
    &=\sum_{s_1\geq s_2\geq \cdots \geq s_{r-2}\geq s_{r-1} \geq 0} \frac{q^{\frac{1}{2}(s_1^2+s_2^2+\cdots +s_{r-2}^2+s_{r-1}^2)+\sum_{\ell=0}^{\lambda -1}s_{r-2\lambda+1+2\ell}}}{(q;q)_{s_1-s_2}(q;q)_{s_2-s_3}\cdots (q;q)_{s_{r-3}-s_{r-2}}(q;q)_{s_{r-2}-s_{r-1}}} \nonumber \\
&\quad \quad \times \sum_{t=-s_{r-1}}^{s_{r-1}} \frac{q^{\frac{1}{2}t^2}}{(q;q)_{s_{r-1}+t}(q;q)_{s_{r-1}-t}} \nonumber \\
&=\sum_{s_1\geq s_2\geq \cdots \geq s_{r-2}\geq s_{r-1} \geq 0} \frac{q^{\frac{1}{2}(s_1^2+s_2^2+\cdots +s_{r-2}^2+s_{r-1}^2)+\sum_{\ell=0}^{\lambda -1}s_{r-2\lambda+1+2\ell}}}{(q;q)_{s_1-s_2}(q;q)_{s_2-s_3}\cdots (q;q)_{s_{r-3}-s_{r-2}}(q;q)_{s_{r-2}-s_{r-1}}}  \nonumber \\
&\quad \quad \times \frac{(-q^{\frac{1}{2}};q)_{s_{r-1}}^2}{(q;q)_{2s_{r-1}}}   \quad \text{(by \eqref{Andrews1})} \nonumber \\
&=\sum_{s_1\geq s_2\geq \cdots \geq s_{r-2}\geq s_{r-1} \geq 0} \frac{q^{\frac{1}{2}(s_1^2+s_2^2+\cdots +s_{r-2}^2+s_{r-1}^2)+\sum_{\ell=0}^{\lambda -1}s_{r-2\lambda+1+2\ell}}}{(q;q)_{s_1-s_2}(q;q)_{s_2-s_3}\cdots (q;q)_{s_{r-3}-s_{r-2}}(q;q)_{s_{r-2}-s_{r-1}}} \nonumber \\
&\quad \quad \times (-q^{\frac{1}{2}};q)_{s_{r-1}}\beta^{(1)}_{s_{r-1}}(1;q) \nonumber \\
&=\sum_{s_1\geq s_2\geq \cdots \geq s_{r-2\lambda-1} \geq 0} \frac{q^{\frac{1}{2}(s_1^2+s_2^2+\cdots +s_{r-2\lambda-1}^2)}(-q^{1/2};q)_{s_{r-2\lambda-1}}}{(q;q)_{s_1-s_2}(q;q)_{s_2-s_3}\cdots (q;q)_{s_{r-2\lambda-2}-s_{r-2\lambda-1}}}\nonumber \\
&\quad \quad \times \sum_{s_{r-2\lambda}=0}^{s_{r-2\lambda-1}}\frac{(-q^{3/2};q)_{s_{r-2\lambda}}q^{\frac{1}{2}s_{r-2\lambda}^2}}{(q;q)_{s_{r-2\lambda-1}-s_{r-2\lambda}}(-q^{1/2};q)_{s_{r-2\lambda-1}}}\nonumber\\
&\quad \quad \times \sum_{s_{r-2\lambda+1}=0}^{s_{r-2\lambda}}\frac{(-q^{1/2};q)_{s_{r-2\lambda+1}}q^{\frac{1}{2}s_{r-2\lambda+1}^2+s_{r-2\lambda+1}}}{(q;q)_{s_{r-2\lambda}-s_{r-2\lambda+1}}(-q^{3/2};q)_{s_{r-2\lambda}}} \times \cdots\nonumber\\
&\quad \quad \times \sum_{s_{r-2}=0}^{s_{r-3}}\frac{(-q^{3/2};q)_{s_{r-2}}q^{\frac{1}{2}s_{r-2}^2}}{(q;q)_{s_{r-3}-s_{r-2}}(-q^{1/2};q)_{s_{r-3}}}  \sum_{s_{r-1}=0}^{s_{r-2}}\frac{(-q^{1/2};q)_{s_{r-1}}q^{\frac{1}{2}s_{r-1}^2+s_{r-1}}}{(q;q)_{s_{r-2}-s_{r-1}}(-q^{3/2};q)_{s_{r-2}}} \nonumber\\
&\quad \quad \times \beta^{(1)}_{s_{r-1}}(1;q).
\end{align}
Here $(\alpha_n^{(1)}(1;q),\beta_n^{(1)}(1;q))$ is the Bailey pair with \cite[p.~470]{Slater1951}
\begin{equation}\label{470}
\begin{split}
\alpha_n^{(1)}(1;q)&=\begin{cases}
    1, & n=0, \\
    (-1)^mq^{3m^2}(q^{m/2}+q^{-m/2}), & n=2m, \\
    (-1)^{m+1}q^{3m^2}(q^{1+7m/2}-q^{(5m+1)/2}), & n=2m+1,
\end{cases} \\
    \beta_n^{(1)}(1;q)&=\frac{1}{(q^{1/2};q)_n(q^2;q^2)_n}.
\end{split}
\end{equation}
Applying \eqref{eq-BP-lift} to it, we obtain a Bailey pair relative to $q$:
\begin{align}
\begin{split}
\alpha_n^{(2)}(q;q)&=\begin{cases}
    1, & n=0, \\
    \frac{1-q^{4m+1}}{1-q}(-1)^mq^{3m^2-m/2}, & n=2m, \\
    \frac{1-q^{4m+3}}{1-q}(-1)^mq^{3m^2+5m/2+1/2}, & n=2m+1,
\end{cases} \\
\beta_n^{(2)}(q;q)&=\beta_n^{(1)}(1;q).
\end{split}
\end{align}
We obtain the Bailey pairs $(\alpha_n^{(2+2i)}(q;q),\beta_n^{(2+2i)}(q;q))$ by applying \eqref{S3}\eqref{S4} with $a=q$ (see \eqref{S3-aq} and \eqref{S4-aq}) to it and iterating $i$ times ($i=1,2,\dots, \lambda$), where
\begin{align}\label{34}
    \alpha_n^{(2+2i)}(q;q)&=\begin{cases}
    1, & n=0, \\
    \frac{1-q^{4m+1}}{1-q}(-1)^mq^{(4i+3)m^2+(2i-1/2)m}, & n=2m, \\
    \frac{1-q^{4m+3}}{1-q}(-1)^mq^{(4i+3)m^2+(6i+5/2)m+2i+1/2}, & n=2m+1.
\end{cases}
\end{align}
Substituting these Bailey pairs into \eqref{main-S0-start}, we deduce that
\begin{align}\label{main-S0-mid}
&S_0(q)\nonumber\\
&=\sum_{s_1\geq s_2\geq \cdots \geq s_{r-2\lambda-1} \geq 0}  \frac{q^{\frac{1}{2}(s_1^2+s_2^2+\cdots +s_{r-2\lambda-1}^2)}(-q^{1/2};q)_{s_{r-2\lambda-1}}}{(q;q)_{s_1-s_2}(q;q)_{s_2-s_3}\cdots (q;q)_{s_{r-2\lambda-2}-s_{r-2\lambda-1}}}\nonumber\\
&\quad\quad\times\beta^{(2+2\lambda)}_{s_{r-2\lambda-1}}(q;q)\nonumber\\
&=\sum_{s_1\geq0}q^{\frac{1}{2}s_1^2}(-q^{1/2};q)_{s_1}\sum_{s_2=0}^{s_1}\frac{q^{\frac{1}{2}s_2^2}(-q^{1/2};q)_{s_2}}{(q;q)_{s_1-s_2} (-q^{1/2};q)_{s_1}}\sum_{s_3=0}^{s_2}\frac{q^{\frac{1}{2}s_3^2}(-q^{1/2};q)_{s_3}}{(q;q)_{s_2-s_3} (-q^{1/2};q)_{s_2}} \times \cdots\nonumber\\
&\quad\quad\times \sum_{s_{r-2\lambda-1}=0}^{s_{r-2\lambda-2}} \frac{q^{\frac{1}{2}s_{r-2\lambda-1}^2}(-q^{1/2};q)_{s_{r-2\lambda-1}}}{(q;q)_{s_{r-2\lambda-2}-s_{r-2\lambda-1}} (-q^{1/2};q)_{s_{r-2\lambda-2}}} \beta^{(2+2\lambda)}_{s_{r-2\lambda-1}}(q;q).
\end{align}
Applying \eqref{eq-BP-reduce} to $(\alpha^{(2+2\lambda)}_{n}(q;q),\beta^{(2+2\lambda)}_{n}(q;q))$, we obtain a Bailey pair relative to $1$:
\begin{align}\label{reduce-S0}
\begin{split}
\alpha_n^{(3+2\lambda)}(1;q)&=\begin{cases}
    1, & n=0, \\
    (-1)^m (q^{(4\lambda+3)m^2+(2\lambda-1/2)m}+q^{(4\lambda+3)m^2-(2\lambda-1/2)m}), & n=2m, \\
    (-1)^m (q^{(4\lambda+3)m^2+(6\lambda+5/2)m+2\lambda+1/2}-q^{(4\lambda+3)m^2+(2\lambda+7/2)m+1}), & n=2m+1,
\end{cases}\\
\beta_n^{(3+2\lambda)}(1;q)&=\beta_n^{(2+2\lambda)}(q;q).
\end{split}
\end{align}
We obtain the Bailey pairs $(\alpha_n^{(3+2\lambda+i)}(1;q),\beta_n^{(3+2\lambda+i)}(1;q))$ by applying \eqref{S2} with $a=1$ (see \eqref{BP-S2-a1}) to it and iterating $i$ times ($i=1,2,\dots, r-2\lambda-2$), where
\begin{align}
    \alpha_n^{(3+2\lambda+i)}(1;q)=\begin{cases}
    1, & n=0, \\
    (-1)^m (q^{(4\lambda+2i+3)m^2+(2\lambda-1/2)m}+q^{(4\lambda+2i+3)m^2-(2\lambda-1/2)m}), & n=2m, \\
    \begin{aligned}
    &(-1)^m(q^{(4\lambda+2i+3)m^2+(6\lambda+2i+5/2)m+2\lambda+(i+1)/2}\\
    &\quad-q^{(4\lambda+2i+3)m^2+(2\lambda+2i+7/2)m+(i+2)/2})
    \end{aligned}, & n=2m+1.
\end{cases}
\end{align}
Substituting these Bailey pairs into \eqref{main-S0-mid}, we deduce that
\begin{align}\label{main-S0-end}
   S_0(q)&=\sum_{s_1\geq0}q^{\frac{1}{2}s_1^2}(-q^{1/2};q)_{s_1}\beta^{(r+1)}_{s_1}(1;q)\nonumber\\
    &=\frac{(-q^{1/2};q)_{\infty}}{(q;q)_{\infty}}\sum_{n\geq0}q^{n^2/2}\alpha^{(r+1)}_n (1;q)\quad \text{(by \eqref{BP-S2-a1-infinite})}\nonumber\\
    &=\frac{(-q^{1/2};q)_{\infty}}{(q;q)_{\infty}}\Big(1+\sum_{m\geq 1}(-1)^m\big(q^{(2r+1)m^2+(2\lambda-1/2)m}+q^{(2r+1)m^2-(2\lambda-1/2)m}\big)\nonumber\\
    &\qquad+\sum_{m\geq0}(-1)^m \big(q^{(2r+1)m^2+(2r+2\lambda+1/2)m+r/2+\lambda}-q^{(2r+1)m^2+(2r-2\lambda+3/2)m+r/2-\lambda+1/2}\big)\Big)\nonumber\\
    &=\frac{(-q^{1/2};q)_{\infty}}{(q;q)_{\infty}} \Big(\sum_{m=-\infty}^{\infty}(-1)^m q^{(2r+1)m^2+(2\lambda-1/2)m} \nonumber \\
    &\qquad -\sum_{m=-\infty}^{\infty} (-1)^m q^{(2r+1)m^2+(2r-2\lambda+3/2)m+r/2-\lambda+1/2}\Big) \nonumber\\
    &=\frac{(-q^{1/2};q)_{\infty}}{(q;q)_{\infty}} \Big((q^{2r+2\lambda+1/2},q^{2r-2\lambda+3/2},q^{4r+2};q^{4r+2})\nonumber\\
    &\quad\quad  -q^{(r+1)/2-\lambda}(q^{4r-2\lambda+5/2},q^{2\lambda-1/2},q^{4r+2};q^{4r+2})\Big).
\end{align}

\textbf{Case 2.} If $n_{r-1}\not\equiv n_{r}$ (mod 2), then we write 
\begin{align}
\label{odd-nk}
n_{r-1}=s_{r-1}+t+1, ~ n_{r}=s_{r-1}-t, ~ -s_{r-1}-1\leq t \leq s_{r-1},~ s_{r-1}\in \mathbb{N},
\end{align}
and we introduce the variables $s_1,s_2,\dots,s_{r-2}$ as in \eqref{eq-variable} and \eqref{eq-variable-relation}.
We have
\begin{align}\label{main-S1-start}
    &S_1(q)\nonumber\\
    &=q^{\frac{r+4\lambda}{8}}\sum_{s_1\geq s_2\geq \cdots\geq s_{r-1} \geq 0} \frac{q^{\frac{1}{2}(s_1^2+s_2^2+\cdots +s_{r-1}^2+s_1+s_2+\cdots+s_{r-1})+\sum_{\ell=0}^{\lambda -1}s_{r-2\lambda+1+2\ell}}}{(q;q)_{s_1-s_2}(q;q)_{s_2-s_3}\cdots (q;q)_{s_{r-2}-s_{r-1}}} \nonumber \\
&\quad \quad \times \sum_{t=-s_{r-1}-1}^{s_{r-1}} \frac{q^{\frac{1}{2}t^2+\frac{1}{2}t}}{(q;q)_{s_{r-1}+t+1}(q;q)_{s_{r-1}-t}} \nonumber \\
&=q^{\frac{r+4\lambda}{8}}\sum_{s_1\geq s_2\geq \cdots\geq s_{r-1} \geq 0} \frac{q^{\frac{1}{2}(s_1^2+s_2^2+\cdots +s_{r-1}^2+s_1+s_2+\cdots +s_{r-1})+\sum_{\ell=0}^{\lambda -1}s_{r-2\lambda+1+2\ell}}}{(q;q)_{s_1-s_2}(q;q)_{s_2-s_3}\cdots (q;q)_{s_{r-2}-s_{r-1}}} \nonumber \\
&\quad \quad \times \frac{(-q;q)_{s_{r-1}}(-1;q)_{s_{r-1}+1}}{(q;q)_{2s_{r-1}+1}} \quad \text{(by \eqref{Andrews2})} \nonumber \\
&=\frac{2q^{\frac{r+4\lambda}{8}}}{1-q}\sum_{s_1\geq s_2\geq \cdots\geq s_{r-1} \geq 0} \frac{q^{\frac{1}{2}(s_1^2+s_2^2+\cdots +s_{r-1}^2+s_1+s_2+\cdots +s_{r-1})+\sum_{\ell=0}^{\lambda -1}s_{r-2\lambda+1+2\ell}}}{(q;q)_{s_1-s_2}(q;q)_{s_2-s_3}\cdots (q;q)_{s_{r-2}-s_{r-1}}} \nonumber \\
&\quad \quad \times (-q;q)_{s_{r-1}}\beta^{(1)}_{s_{r-1}}(q;q) \nonumber \\
&=\frac{2q^{\frac{r+4\lambda}{8}}}{1-q}\sum_{s_1\geq s_2\geq \cdots \geq s_{r-2\lambda-1} \geq 0} \frac{q^{\frac{1}{2}(s_1^2+s_2^2+\cdots +s_{r-2\lambda-1}^2+s_1+s_2+\cdots +s_{r-2\lambda-1})}(-q;q)_{s_{r-2\lambda-1}}}{(q;q)_{s_1-s_2}(q;q)_{s_2-s_3}\cdots (q;q)_{s_{r-2\lambda-2}-s_{r-2\lambda-1}}}\nonumber \\
&\quad \quad \times \sum_{s_{r-2\lambda}=0}^{s_{r-2\lambda-1}}\frac{(-q^{2};q)_{s_{r-2\lambda}}q^{\frac{1}{2}s_{r-2\lambda}^2+\frac{1}{2}s_{r-2\lambda}}}{(q;q)_{s_{r-2\lambda-1}-s_{r-2\lambda}}(-q;q)_{s_{r-2\lambda-1}}}\nonumber\\
&\quad \quad \times \sum_{s_{r-2\lambda+1}=0}^{s_{r-2\lambda}}\frac{(-q;q)_{s_{r-2\lambda+1}}q^{\frac{1}{2}s_{r-2\lambda+1}^2+\frac{3}{2}s_{r-2\lambda+1}}}{(q;q)_{s_{r-2\lambda}-s_{r-2\lambda+1}}(-q^{2};q)_{s_{r-2\lambda}}} \times \cdots\nonumber\\
&\quad \quad \times \sum_{s_{r-2}=0}^{s_{r-3}}\frac{(-q^{2};q)_{s_{r-2}}q^{\frac{1}{2}s_{r-2}^2+\frac{1}{2}s_{r-2}}}{(q;q)_{s_{r-3}-s_{r-2}}(-q;q)_{s_{r-3}}}  \sum_{s_{r-1}=0}^{s_{r-2}}\frac{(-q;q)_{s_{r-1}}q^{\frac{1}{2}s_{r-1}^2+\frac{3}{2}s_{r-1}}}{(q;q)_{s_{r-2}-s_{r-1}}(-q^{2};q)_{s_{r-2}}} \nonumber\\
&\quad \quad \times \beta^{(1)}_{s_{r-1}}(q;q).
\end{align}
Here $(\alpha_n^{(1)}(q;q),\beta_n^{(1)}(q;q))$ is the Bailey pair with \cite[C(3)]{Slater1951}
\begin{equation}\label{C3}
\begin{split}
\alpha_n^{(1)}(q;q)&=\begin{cases}
    1, & n=0, \\
    (-1)^mq^{3m^2+m}, & n=2m, \\
    (-1)^{m+1}q^{3m^2+5m+2}, & n=2m+1,
\end{cases} \\
    \beta_n^{(1)}(q;q)&=\frac{1}{(q;q)_n(q^3;q^2)_n}.
\end{split}
\end{equation}
Applying \eqref{eq-BP-lift} to it, we obtain a Bailey pair relative to $q^2$:
\begin{align}
\begin{split}
\alpha_n^{(2)}(q^2;q)&=\begin{cases}
    1, & n=0, \\
    \frac{1-q^{4m+2}}{1-q^2}(-1)^mq^{3m^2+m}, & n=2m, \\
    0, & n=2m+1,
\end{cases} \\
\beta_n^{(2)}(q^2;q)&=\beta_n^{(1)}(q;q).
\end{split}
\end{align}
We obtain the Bailey pairs $(\alpha_n^{(2+2i)}(q^2;q),\beta_n^{(2+2i)}(q^2;q))$ by applying \eqref{S6}\eqref{S5} with $a=q^2$ to it and iterating $i$ times ($i=1,2,\dots, \lambda$), where
\begin{align}\label{65}
    \alpha_n^{(2+2i)}(q^2;q)&=\begin{cases}
    1, & n=0, \\
    \frac{1-q^{4m+2}}{1-q^2}(-1)^mq^{(4i+3)m^2+(4i+1)m}, & n=2m, \\
    0, & n=2m+1.
\end{cases}
\end{align}
Substituting these Bailey pairs into \eqref{main-S1-start}, we deduce that
\begin{align}\label{main-S1-mid}
&S_1(q)\nonumber\\
&=\frac{2q^{\frac{r+4\lambda}{8}}}{1-q}\sum_{s_1\geq s_2\geq \cdots \geq s_{r-2\lambda-1} \geq 0} \frac{q^{\frac{1}{2}(s_1^2+s_2^2+\cdots +s_{r-2\lambda-1}^2+s_1+s_2+\cdots +s_{r-2\lambda-1})}(-q;q)_{s_{r-2\lambda-1}}}{(q;q)_{s_1-s_2}(q;q)_{s_2-s_3}\cdots (q;q)_{s_{r-2\lambda-2}-s_{r-2\lambda-1}}}\nonumber\\
&\quad\quad\times\beta^{(2+2\lambda)}_{s_{r-2\lambda-1}}(q^2;q)\nonumber\\
&=\frac{2q^{\frac{r+4\lambda}{8}}}{1-q}\sum_{s_1\geq0}q^{\frac{1}{2}s_1^2+\frac{1}{2}s_1}(-q;q)_{s_1}\sum_{s_2=0}^{s_1}\frac{q^{\frac{1}{2}s_2^2+\frac{1}{2}s_2}(-q;q)_{s_2}}{(q;q)_{s_1-s_2} (-q;q)_{s_1}} \times \cdots\nonumber\\
&\quad\quad\times \sum_{s_{r-2\lambda-1}=0}^{s_{r-2\lambda-2}} \frac{q^{\frac{1}{2}s_{r-2\lambda-1}^2+\frac{1}{2}s_{r-2\lambda-1}}(-q;q)_{s_{r-2\lambda-1}}}{(q;q)_{s_{r-2\lambda-2}-s_{r-2\lambda-1}} (-q;q)_{s_{r-2\lambda-2}}} \beta^{(2+2\lambda)}_{s_{r-2\lambda-1}}(q^2;q).
\end{align}
Applying \eqref{eq-BP-reduce} to $(\alpha^{(2+2\lambda)}_{n}(q^2;q),\beta^{(2+2\lambda)}_{n}(q^2;q))$, we obtain a Bailey pair relative to $q$:
\begin{align}
\begin{split}
\alpha_n^{(3+2\lambda)}(q;q)&=\begin{cases}
    1, & n=0, \\
    (-1)^mq^{(4\lambda+3)m^2+(4\lambda+1)m}, & n=2m, \\
    (-1)^{m+1} q^{(4\lambda+3)m^2+(4\lambda+5)m+2}, & n=2m+1,
\end{cases}\\
\beta_n^{(3+2\lambda)}(q;q)&=\beta_n^{(2+2\lambda)}(q^2;q).
\end{split}
\end{align}
We obtain the Bailey pairs $(\alpha_n^{(3+2\lambda+i)}(q;q),\beta_n^{(3+2\lambda+i)}(q;q))$ by applying \eqref{S2} with $a=q$ (see \eqref{BP-S2-aq}) to it and iterating $i$ times ($i=1,2,\dots, r-2\lambda-2$), where
\begin{align}
    \alpha_n^{(3+2\lambda+i)}(q;q)&=\begin{cases}
    1, & n=0, \\
    (-1)^mq^{(4\lambda+2i+3)m^2+(4\lambda+i+1)m}, & n=2m, \\
    (-1)^{m+1} q^{(4\lambda+2i+3)m^2+(4\lambda+3i+5)m+i+2}, & n=2m+1.
\end{cases}
\end{align}
Substituting these Bailey pairs into \eqref{main-S1-mid}, we deduce that
\begin{align}\label{main-S1-end}
    S_1(q)
    &=\frac{2q^{\frac{r+4\lambda}{8}}}{1-q}\sum_{s_1\geq0}q^{\frac{1}{2}s_1^2+\frac{1}{2}s_1}(-q;q)_{s_1}\beta^{(r+1)}_{s_1}(q;q)\nonumber\\
    &=\frac{2q^{\frac{r+4\lambda}{8}}}{1-q}\frac{(-q;q)_{\infty}}{(q^2;q)_{\infty}}\sum_{n\geq0}q^{\frac{1}{2}n^2+\frac{1}{2}n}\alpha^{(r+1)}_n (q;q) \quad\text{(by \eqref{BP-S2-aq-infinite})}\nonumber\\
    &=2q^{\frac{r+4\lambda}{8}}\frac{(-q;q)_{\infty}}{(q;q)_{\infty}}\Big(\sum_{m\geq0}(-1)^mq^{(2r+1)m^2+(r+2\lambda)m}\nonumber\\
    &\qquad+\sum_{m\geq0}(-1)^{m+1}q^{(2r+1)m^2+(3r-2\lambda+2)m+r-2\lambda+1} \Big)\nonumber\\
    &=2q^{\frac{r+4\lambda}{8}}\frac{(-q;q)_{\infty}}{(q;q)_{\infty}}\sum_{m=-\infty}^{\infty}(-1)^m q^{(2r+1)m^2+(r+2\lambda)m} \nonumber\\
    &=2q^{\frac{r+4\lambda}{8}}\frac{(-q;q)_{\infty}}{(q;q)_{\infty}} (q^{3r+2\lambda+1},q^{r-2\lambda+1},q^{4r+2};q^{4r+2}).
\end{align}
Adding \eqref{main-S0-end} and \eqref{main-S1-end} together, we deduce \eqref{main-1} for $\lambda \in \{0,1,2,\dots,\lfloor (r-1)/2\rfloor\}$.

(2) We now discuss the remaining case with the rank $r=2k$ and $\lambda=k$. The arguments are almost the same with part (1) except that the Bailey pairs we used at the last step are different.

We first consider the case when $n_{2k-1}+n_{2k}$ is even. We obtain the Bailey pair $(\alpha^{(2k)}_n,\beta^{(2k)}_n)$ from \eqref{34}, where
\begin{align}
    \alpha_n^{(2k)}(q;q)&=\begin{cases}
    1, & n=0, \\
    \frac{1-q^{4m+1}}{1-q}(-1)^mq^{(4k-1)m^2+(2k-5/2)m}, & n=2m, \\
    \frac{1-q^{4m+3}}{1-q}(-1)^mq^{(4k-1)m^2+(6k-7/2)m+2k-3/2}, & n=2m+1.
\end{cases}
\end{align}
Arguing similarly as in (1), we have
\begin{align}\label{2kk0}
    S_0(q)&=\sum_{s_1\ge0}q^{\frac{1}{2}s_1^2+s_1}(-q^{\frac{1}{2}};q)_{s_1}\beta^{(2k)}_{s_1}(q;q)\nonumber\\
    &=\frac{(-q^{\frac{1}{2}};q)_\infty}{(q^2;q)_\infty} \sum_{n=0}^\infty \frac{q^{\frac{1}{2}n^2+n}}{1+q^{n+\frac{1}{2}}}\alpha^{(2k)}_n(q;q)\quad\text{(by \eqref{S3-aq-infinite})}\nonumber\\
    &=\frac{(-q^{\frac{1}{2}};q)_\infty}{(q;q)_\infty}\Big(\sum_{m\ge 0} (1-q^{2m+\frac{1}{2}})(-1)^mq^{(4k+1)m^2+(2k-\frac{1}{2})m}\nonumber\\
    &\quad\quad\quad\quad\quad\quad\quad+\sum_{m\ge 0} (1-q^{2m+\frac{3}{2}})(-1)^mq^{(4k+1)m^2+(6k+\frac{1}{2})m+2k}\Big)\nonumber\\
    &=\frac{(-q^{\frac{1}{2}};q)_\infty}{(q;q)_\infty}\Big(\sum_{m\ge0}(-1)^m\big(q^{(4k+1)m^2+(2k-\frac{1}{2})m}-q^{(4k+1)m^2+(6k+\frac{5}{2})m+2k+\frac{3}{2}}\big)\nonumber\\
     &\quad\quad\quad\quad\quad\quad\quad-\sum_{m\ge0}(-1)^m\big(q^{(4k+1)m^2+(2k+\frac{3}{2})m+\frac{1}{2}}-q^{(4k+1)m^2+(6k+\frac{1}{2})m+2k}\big)\Big)\nonumber\\
     &=\frac{(-q^{\frac{1}{2}};q)_\infty}{(q;q)_\infty}\Big(\sum_{m=-\infty}^{\infty}(-1)^mq^{(4k+1)m^2+(2k-\frac{1}{2})m}-\sum_{m=-\infty}^{\infty}(-1)^mq^{(4k+1)m^2+(2k+\frac{3}{2})m+\frac{1}{2}}\Big)\nonumber\\
     &=\frac{(-q^{\frac{1}{2}};q)_\infty}{(q;q)_\infty}((q^{6k+\frac{1}{2}},q^{2k+\frac{3}{2}},q^{8k+2};q^{8k+2})_{\infty}-q^{\frac{1}{2}}(q^{6k+\frac{5}{2}},q^{2k-\frac{1}{2}},q^{8k+2};q^{8k+2})_{\infty}).
\end{align}

We then consider the case when $n_{2k-1}+ n_{2k}$  is odd. We obtain the Bailey pair $(\alpha^{(2k)}_n,\beta^{(2k)}_n)$ from \eqref{65}, where
\begin{align}
    \alpha_n^{(2k)}(q^2;q)&=\begin{cases}
    1, & n=0, \\
    \frac{1-q^{4m+2}}{1-q^2}(-1)^mq^{(4k-1)m^2+(4k-3)m}, & n=2m, \\
    0, & n=2m+1.
\end{cases}
\end{align}
Then we have
\begin{align}\label{2kk1}
    S_1(q)&=\frac{2q^{\frac{3}{4}k}}{1-q}\sum_{s_1\ge 0}(-q;q)_{s_1}q^{\frac{1}{2}s_1^2+\frac{3}{2}s_1}\beta_{s_1}^{(2k)}(q^2;q)\nonumber\\
    &=\frac{2q^{\frac{3}{4}k}}{1-q}\frac{(-q^2;q)_{\infty}}{(q^3;q)_{\infty}}\sum_{n=0}^{\infty}\frac{1+q}{1+q^{n+1}}q^{\frac{1}{2}n^2+\frac{3}{2}n}\alpha_{n}^{(2k)}(q^2;q)\nonumber\\
    &=2q^{\frac{3}{4}k}\frac{(-q;q)_{\infty}}{(q;q)_{\infty}}\sum_{m=0}^{\infty}(-1)^m(1-q^{2m+1})q^{(4k+1)m^2+4km}\nonumber\\
    &=2q^{\frac{3}{4}k}\frac{(-q;q)_{\infty}}{(q;q)_{\infty}}\sum_{m=-\infty}^{\infty}(-1)^mq^{(4k+1)m^2+4km}\nonumber\\
    &=2q^{\frac{3}{4}k}\frac{(-q;q)_{\infty}}{(q;q)_{\infty}} (q^{8k+1},q,q^{8k+2};q^{8k+2})_{\infty}.
\end{align}
Here for the second equality we used the limiting case of \eqref{S6} with $a=q^2$. 

Adding \eqref{2kk0} and \eqref{2kk1} together, we obtain \eqref{main-1} for $r=2k$ and $\lambda=k$.

Combining (1) and (2), we prove \eqref{main-1}.
\end{proof}

\begin{proof}[Proof of Theorem \ref{thm-main-2}]
(1) We first consider the even rank case $r=2k$. From \eqref{eq-quadratic} we have
\begin{align}\label{eq-quadratic-even}
&\frac{1}{2}n^\mathrm{T}\mathcal{C}(D_{2k})^{-1}n+n^\mathrm{T}B^{(0)}
=\frac{1}{2}(s_1^2+s_2^2+\cdots+s_{2k-2}^2+s_{2k-1}^2+t^2) \nonumber \\
&\qquad +\frac{1}{2}(s_1-s_2+s_3-s_4+\cdots+s_{2k-3}-s_{2k-2})+\frac{1}{2}(s_{2k-1}-t).
\end{align}

When $n_{2k-1}+n_{2k}$ is even, using \eqref{nk-even} and \eqref{Andrews1} we have 
\begin{align}
    S_0(q)&=\sum_{s_1\geq s_2\geq \cdots \geq s_{2k-2}\geq s_{2k-1}\geq 0} \frac{q^{\frac{1}{2}(s_1^2+s_2^2+\cdots+s_{2k-2}^2+s_{2k-1}^2)+\frac{1}{2}(s_1-s_2+\cdots+s_{2k-3}-s_{2k-2}+s_{2k-1})}}{(q;q)_{s_1-s_2}(q;q)_{s_2-s_3}\cdots (q;q)_{s_{2k-3}-s_{2k-2}}(q;q)_{s_{2k-2}-s_{2k-1}}} \nonumber \\
    &\qquad \times \sum_{t=-s_{2k-1}}^{s_{2k-1}} \frac{q^{\frac{1}{2}(t^2-t)}}{(q;q)_{s_{2k-1}-t}(q;q)_{s_{2k-1}+t}} \nonumber \\
     &=\sum_{s_1=0}^\infty q^{\frac{1}{2}(s_1^2+s_1)}(-1;q)_{s_1}\sum_{s_2=0}^{s_1}\frac{q^{\frac{1}{2}(s_2^2-s_2)}(-q;q)_{s_2}}{(q;q)_{s_1-s_2}(-1;q)_{s_1}}\cdots \nonumber \\
    &\qquad \sum_{s_{2k-3}=0}^{s_{2k-4}}\frac{q^{\frac{1}{2}(s_{2k-3
    }^2+s_{2k-3})}(-1;q)_{s_{2k-3}}}{(q;q)_{s_{2k-4}-s_{2k-3}}(-q;q)_{s_{2k-4}}}
    \sum_{s_{2k-2}=0}^{s_{2k-3}}\frac{q^{\frac{1}{2}(s_{2k-2
    }^2-s_{2k-2})}(-q;q)_{s_{2k-2}}}{(q;q)_{s_{2k-3}-s_{2k-2}}(-1;q)_{s_{2k-3}}} \nonumber \\
    &\qquad     \sum_{s_{2k-1}=0}^{s_{2k-2}}\frac{q^{\frac{1}{2}(s_{2k-1}^2+s_{2k-1})}(-1;q)_{s_{2k-1}}}{(q;q)_{s_{2k-2}-s_{2k-1}}(-q;q)_{s_{2k-2}}}\beta_{s_{2k-1}}^{(1)}(1;q).
\end{align}
Here $(\alpha_n^{(1)}(1;q),\beta_n^{(1)}(1;q))$ is the Bailey pair with \cite[(C1)]{Slater1951}
\begin{equation}\label{C1}
\begin{split}
\alpha_n^{(1)}(1;q)&=\begin{cases}
    1, & n=0, \\
    (-1)^mq^{3m^2}(q^m+q^{-m}), & n=2m, \\
    0, & n=2m+1,
\end{cases} \\
    \beta_n^{(1)}(1;q)&=\frac{1}{(q;q)_n(q;q^2)_n}.
\end{split}
\end{equation}

Applying \eqref{S5-special} and \eqref{S6-special} to it and iterating $i$ times ($i=1,2,\dots,k-1$), we obtain the Bailey pairs $(\alpha_n^{(1+2i)}(1;q),\beta_n^{(1+2i)}(1;q))$ with
\begin{align}
    \alpha_n^{(1+2i)}(1;q)=q^{in^2}\alpha_n^{(1)}(1;q).
\end{align}
We have 
\begin{align}\label{proof-thm2-even-S0}
    S_0(q)&=\sum_{s_1=0}^\infty q^{\frac{1}{2}(s_1^2+s_1)}(-1;q)_{s_1}\beta_{s_1}^{(2k-1)}(1;q) \nonumber \\
    &=\frac{(-q;q)_\infty}{(q;q)_\infty} \sum_{n=0}^\infty \frac{q^{n(n+1)/2}(-1;q)_n}{(-q;q)_n}\alpha_n^{(2k-1)}(1;q) \quad\text{(by \eqref{S6-a1-infinite})}\nonumber \\
    &=\frac{(-q;q)_\infty}{(q;q)_\infty}\Big(1+\sum_{n=1}^\infty \frac{q^{n(n+1)/2}(-1;q)_n}{(-q;q)_n}q^{(k-1)n^2}\alpha_n^{(1)}(1;q)  \Big) \nonumber \\
    &=\frac{(-q;q)_\infty}{(q;q)_\infty}\Big(1+2\sum_{m=1}^\infty \frac{q^{m(2m+1)}}{1+q^{2m}}q^{4(k-1)m^2}(-1)^mq^{3m^2-m}(1+q^{2m})\Big) \nonumber \\
    &=\frac{(-q;q)_\infty}{(q;q)_\infty}\sum_{m=-\infty}^\infty (-1)^mq^{(4k+1)m^2} \nonumber \\
    &=\frac{(-q;q)_\infty}{(q;q)_\infty}(q^{4k+1},q^{4k+1},q^{8k+2};q^{8k+2})_\infty.
\end{align}

When $n_{2k-1}+n_{2k}$ is odd, using \eqref{odd-nk} we have
\begin{align}
    S_1(q)&=q^{\frac{1}{4}k}\sum_{s_1\geq s_2\geq \cdots \geq s_{2k-2}\geq s_{2k-1}\geq 0} \frac{q^{\frac{1}{2}(s_1^2+\cdots+s_{2k-2}^2+s_{2k-1}^2)+s_1+s_3+\cdots +s_{2k-3}+s_{2k-1}}}{(q;q)_{s_1-s_2}\cdots (q;q)_{s_{2k-3}-s_{2k-2}}(q;q)_{s_{2k-2}-s_{2k-1}}}  \nonumber \\
    &\qquad \times \sum_{t=-s_{2k-1}-1}^{s_{2k-1}} \frac{q^{\frac{1}{2}t^2}}{(q;q)_{s_{2k-1}-t}(q;q)_{s_{2k-1}+t+1}} \nonumber \\
    &=q^{\frac{1}{4}k}\sum_{s_1\geq s_2\geq \cdots \geq s_{2k-2}\geq s_{2k-1}\geq 0} \frac{q^{\frac{1}{2}(s_1^2+\cdots+s_{2k-2}^2)+s_1+s_3+\cdots +s_{2k-3}}}{(q;q)_{s_1-s_2}\cdots (q;q)_{s_{2k-3}-s_{2k-2}}(q;q)_{s_{2k-2}-s_{2k-1}}} \nonumber \\
    &\qquad \times \frac{q^{\frac{1}{2}s_{2k-1}^2+s_{2k-1}}(-q^{\frac{1}{2}};q)_{s_{2k-1}}(-q^{\frac{1}{2}};q)_{s_{2k-1}+1}}{(q;q)_{2s_{2k-1}+1}}  \quad \text{(by \eqref{Andrews2})}
    \nonumber \\
    &=q^{\frac{1}{4}k}\sum_{s_1=0}^\infty q^{\frac{1}{2}s_1^2+s_1}(-q^{\frac{1}{2}};q)_{s_1} \sum_{s_2=0}^{s_1} \frac{q^{\frac{1}{2}s_2^2}(-q^{\frac{3}{2}};q)_{s_2}}{(q;q)_{s_1-s_2}(-q^{\frac{1}{2}};q)_{s_1}}\cdots  \nonumber \\
    &\qquad \times
    \sum_{s_{2k-2}=0}^{s_{2k-3}} \frac{q^{\frac{1}{2}s_{2k-2}^2}(-q^{\frac{3}{2}};q)_{s_{2k-2}}}{(q;q)_{s_{2k-3}-s_{2k-2}}(-q^{\frac{1}{2}};q)_{s_{2k-3}}} \nonumber \\
    &\qquad \times \sum_{s_{2k-1}=0}^{s_{2k-2}} \frac{q^{\frac{1}{2}s_{2k-1}^2+s_{2k-1}}(-q^{\frac{1}{2}};q)_{s_{2k-1}}}{(q;q)_{s_{2k-2}-s_{2k-1}}(-q^{\frac{3}{2}};q)_{s_{2k-2}}}\beta_{s_{2k-1}}^{(1)}(q;q).
\end{align}
Here $(\alpha_n^{(1)}(q;q),\beta_n^{(1)}(q;q))$ is the G(2) Bailey pair in \cite[p.~469]{Slater1951} with $q^{1/2}$ replaced by $-q^{1/2}$:
\begin{equation}\label{BP-new}
\begin{split}
    \alpha_n^{(1)}(q;q)&=
        (-1)^{\lfloor (n+1)/2\rfloor} q^{\frac{3}{4}n^2+\frac{1}{4}n}\frac{1+q^{n+\frac{1}{2}}}{1-q},  \\
    \beta_n^{(1)}(q;q)&=\frac{1}{(q^{\frac{1}{2}};q)_{n+1}(q^2;q^2)_n}.
\end{split}
\end{equation}
Applying \eqref{S3-aq} and \eqref{S4-aq} to it and iterating $i$ times ($i=1,2,\dots,k-1$), we obtain the Bailey pairs $(\alpha_n^{(1+2i)}(q;q),\beta_n^{(1+2i)}(q;q))$ with
\begin{align}
    \alpha_n^{(1+2i)}(q;q)=q^{i(n^2+n)}\alpha_n^{(1)}(q;q).
\end{align}
We have
\begin{align}\label{proof-thm2-even-S1}
  S_1(q)   
    &=q^{\frac{1}{4}k}\sum_{s_1=0}^\infty q^{\frac{1}{2}s_1^2+s_1}(-q^{\frac{1}{2}};q)_{s_1}\beta_{s_1}^{(2k-1)}(q;q) \nonumber \\
    &=q^{\frac{1}{4}k}\frac{(-q^{1/2};q)_\infty}{(q;q)_\infty} \sum_{n=0}^\infty \frac{q^{\frac{1}{2}n^2+n}}{1+q^{n+\frac{1}{2}}}\alpha_n^{(2k-1)}(q;q)\quad\text{(by \eqref{S3-aq-infinite})} \nonumber \\
    &=q^{\frac{1}{4}k}\frac{(-q^{1/2};q)_\infty}{(q;q)_\infty} \Big(\sum_{n=0}^\infty \frac{q^{\frac{1}{2}n^2+n}}{1+q^{n+\frac{1}{2}}}q^{(k-1)(n^2+n)}\alpha_n^{(1)}(q;q)\Big) \nonumber \\
    &=q^{\frac{1}{4}k}\frac{(-q^{1/2};q)_\infty}{(q;q)_\infty} \Big(\sum_{n=0}^\infty q^{(k+\frac{1}{4})n^2+(k+\frac{1}{4})n}(-1)^{\lfloor \frac{n+1}{2}\rfloor} \Big) \nonumber \\
    &=q^{\frac{1}{4}k}\frac{(-q^{1/2};q)_\infty}{(q;q)_\infty}\sum_{n=-\infty}^\infty (-1)^nq^{(4k+1)n^2+(2k+\frac{1}{2})n} \nonumber \\
    &=q^{\frac{1}{4}k}\frac{(-q^{1/2};q)_\infty}{(q;q)_\infty}(q^{2k+\frac{1}{2}},q^{6k+\frac{3}{2}},q^{8k+2};q^{8k+2})_\infty.
\end{align}
Adding \eqref{proof-thm2-even-S0} and \eqref{proof-thm2-even-S1} together, we obtain \eqref{eq-thm2-even}.

(2) Now we consider the odd rank case $r=2k+1$.
From \eqref{eq-quadratic} we have
\begin{align}\label{eq-quadratic-odd}
&\frac{1}{2}n^\mathrm{T}\mathcal{C}(D_{2k+1})^{-1}n+n^\mathrm{T}B^{(1)}
=\frac{1}{2}(s_1^2+s_2^2+\cdots+s_{2k-1}^2+s_{2k}^2+t^2+t) \nonumber \\
&\qquad \qquad +\frac{1}{2}(-s_1+s_2-s_3+s_4-\cdots+s_{2k-2}-s_{2k-1}+s_{2k}).
\end{align}
 We have 
\begin{align}
    S_0(q)&=\sum_{s_1\geq s_2\geq \cdots \geq s_{2k-1}\geq s_{2k}\geq 0} \frac{q^{\frac{1}{2}(s_1^2+s_2^2+\cdots+s_{2k-1}^2+s_{2k}^2)+\frac{1}{2}(-s_1+s_2+\cdots+s_{2k-2}-s_{2k-1}+s_{2k})}}{(q;q)_{s_1-s_2}(q;q)_{s_2-s_3}\cdots (q;q)_{s_{2k-1}-s_{2k}}} \nonumber \\
    &\qquad \times \sum_{t=-s_{2k}}^{s_{2k}} \frac{q^{\frac{1}{2}(t^2+t)}}{(q;q)_{s_{2k}-t}(q;q)_{s_{2k}+t}} \nonumber \\
    &=\sum_{s_1=0}^\infty q^{\frac{1}{2}(s_1^2-s_1)}(-q;q)_{s_1}\sum_{s_2=0}^{s_1}\frac{q^{\frac{1}{2}(s_2^2+s_2)}(-1;q)_{s_2}}{(q;q)_{s_1-s_2}(-q;q)_{s_1}}\cdots \nonumber \\
    &\qquad \times \sum_{s_{2k-2}=0}^{s_{2k-3}}\frac{q^{\frac{1}{2}(s_{2k-2
    }^2+s_{2k-2})}(-1;q)_{s_{2k-2}}}{(q;q)_{s_{2k-3}-s_{2k-2}}(-q;q)_{s_{2k-3}}}
    \sum_{s_{2k-1}=0}^{s_{2k-2}}\frac{q^{\frac{1}{2}(s_{2k-1
    }^2-s_{2k-1})}(-q;q)_{s_{2k-1}}}{(q;q)_{s_{2k-2}-s_{2k-1}}(-1;q)_{s_{2k-2}}} \nonumber \\
    &\qquad  \times    \sum_{s_{2k}=0}^{s_{2k-1}}\frac{q^{\frac{1}{2}(s_{2k}^2+s_{2k})}(-1;q)_{s_{2k}}}{(q;q)_{s_{2k-1}-s_{2k}}(-q;q)_{s_{2k-1}}}\beta_{s_{2k}}^{(1)}(1;q).
\end{align}
Here we used \eqref{Andrews1} for the last equality, and $(\alpha_n^{(1)}(1;q),\beta_n^{(1)}(1;q))$ is the Bailey pair in \eqref{C1}. Applying \eqref{S5-special} and \eqref{S6-special} to this Bailey pair and iterating $i$ times ($i=1,2,\dots,k-1$), we obtain the Bailey pair $(\alpha_n^{(1+2i)}(1;q),\beta_n^{(1+2i)}(1;q))$ with
\begin{align}
    \alpha_n^{(1+2i)}(1;q)=q^{in^2}\alpha_n^{(1)}(1;q).
\end{align}
Applying \eqref{S6-special} to the Bailey pair $(\alpha_n^{(2k-1)}(1;q),\beta_n^{(2k-1)}(1;q))$, we obtain the Bailey pair $(\alpha_n^{(2k)}(1;q),\beta_n^{(2k)}(1;q))$ with
\begin{align}
    \alpha_n^{(2k)}(1;q)=\frac{(-1;q)_n}{(-q;q)_n}q^{(n^2+n)/2}\alpha_n^{(2k-1)}(1;q).
\end{align}
We have
\begin{align}\label{proof-thm2-odd-S0}
S_0(q)&=\sum_{s_1=0}^\infty q^{\frac{1}{2}(s_1^2-s_1)}(-q;q)_{s_1}\beta_{s_1}^{(2k)}(1;q)\nonumber \\
&=\frac{(-1;q)_\infty}{(q;q)_\infty}\sum_{n=0}^\infty \frac{(-q;q)_n}{(-1;q)_n}q^{(n^2-n)/2}\alpha_n^{(2k)}(1;q) \quad\text{(by \eqref{S5-a1-infinite})}\nonumber \\
&=\frac{(-1;q)_\infty}{(q;q)_\infty}\sum_{n=0}^\infty q^{kn^2}\alpha_n^{(1)}(1;q) \nonumber \\
&=\frac{(-1;q)_\infty}{(q;q)_\infty} \Big(1+\sum_{m=1}^\infty (-1)^mq^{(4k+3)m^2}(q^m+q^{-m})\Big) \nonumber \\
&=\frac{(-1;q)_\infty}{(q;q)_\infty} \sum_{m=-\infty}^\infty (-1)^mq^{(4k+3)m^2-m}\nonumber \\
&=2\frac{(-q;q)_\infty}{(q;q)_\infty}(q^{4k+2},q^{4k+4},q^{8k+6};q^{8k+6})_\infty.
\end{align}

Similarly, in view of \eqref{odd-nk}, we have 
\begin{align}
    S_1(q)&=q^{\frac{2k+3}{8}}\sum_{s_1\geq s_2\geq \cdots \geq s_{2k-1}\geq s_{2k}\geq 0} \frac{q^{\frac{1}{2}(s_1^2+s_2^2+\cdots+s_{2k-1}^2+s_{2k}^2)+(s_2+s_4+\cdots+s_{2k-2}+s_{2k})}}{(q;q)_{s_1-s_2}(q;q)_{s_2-s_3}\cdots (q;q)_{s_{2k-2}-s_{2k-1}}(q;q)_{s_{2k-1}-s_{2k}}} \nonumber \\
    &\qquad \times \sum_{t=-s_{2k}-1}^{s_{2k}}  \frac{q^{\frac{1}{2}t^2+t}}{(q;q)_{s_{2k}-t}(q;q)_{s_{2k}+t+1}} \nonumber \\
    &=q^{\frac{2k+3}{8}}\sum_{s_1\geq s_2\geq \cdots \geq s_{2k-1}\geq s_{2k}\geq 0} \frac{q^{\frac{1}{2}(s_1^2+s_2^2+\cdots+s_{2k-1}^2+s_{2k}^2)+(s_2+s_4+\cdots+s_{2k-2}+s_{2k})}}{(q;q)_{s_1-s_2}(q;q)_{s_2-s_3}\cdots (q;q)_{s_{2k-2}-s_{2k-1}}(q;q)_{s_{2k-1}-s_{2k}}}\nonumber \\
    &\qquad \times \frac{(-q^{\frac{3}{2}};q)_{s_{2k}}(-q^{-\frac{1}{2}};q)_{s_{2k}+1}}{(q;q)_{2s_{2k}+1}} \quad \text{(by \eqref{Andrews2})}\nonumber \\
    &=q^{\frac{2k-1}{8}}\sum_{s_1=0}^\infty q^{\frac{1}{2}s_1^2}(-q^{\frac{3}{2}};q)_{s_1} \sum_{s_2=0}^{s_1} \frac{q^{\frac{1}{2}s_2^2+s_2}(-q^{\frac{1}{2}};q)_{s_2}}{(q;q)_{s_1-s_2}(-q^{\frac{3}{2}};q)_{s_1}}  \cdots \nonumber \\
    &\qquad \times \sum_{s_{2k-1}=0}^{s_{2k-2}} \frac{q^{\frac{1}{2}s_{2k-1}^2}(-q^{\frac{3}{2}};q)_{s_{2k-1}}}{(q;q)_{s_{2k-2}-s_{2k-1}}(-q^{\frac{1}{2}};q)_{s_{2k-2}}}\sum_{s_{2k}=0}^{s_{2k-1}} \frac{q^{\frac{1}{2}s_{2k}^2+s_{2k}}(-q^{\frac{1}{2}};q)_{s_{2k}}}{(q;q)_{s_{2k-1}-s_{2k}}(-q^{\frac{3}{2}};q)_{s_{2k-1}}}\nonumber\\
    &\qquad \times\beta_{s_{2k}}^{(1)}(q;q).   
\end{align}
Here $(\alpha_n^{(1)}(q;q),\beta_n^{(1)}(q;q))$ is the Bailey pair \eqref{BP-new}. Applying \eqref{S3-aq} and \eqref{S4-aq} to this Bailey pair and iterating $i$ times ($i=1,2,\dots,k-1$), we obtain the Bailey pairs $(\alpha_n^{(1+2i)}(q;q),\beta_n^{(1+2i)}(q;q))$ with
\begin{align}
    \alpha_n^{(1+2i)}(q;q)=q^{i(n^2+n)}\alpha_n^{(1)}(q;q).
\end{align}
Applying \eqref{S3-aq} again we obtain the Bailey pair $(\alpha_n^{(2k)}(q;q),\beta_n^{(2k)}(q;q)$ with 
\begin{align}
\alpha_n^{(2k)}(q;q)=\frac{1+q^{\frac{1}{2}}}{1+q^{n+\frac{1}{2}}}q^{\frac{1}{2}n^2+n}\alpha_n^{(2k-1)}(q;q).
\end{align}
We have
\begin{align}\label{proof-thm2-odd-S1}
    S_1(q)&=q^{\frac{2k-1}{8}}\sum_{s_1=0}^\infty q^{\frac{1}{2}s_1^2}(-q^{\frac{3}{2}};q)_{s_1}\beta_{s_1}^{(2k)}(q;q) \nonumber \\
    &=q^{\frac{2k-1}{8}}\frac{(-q^{\frac{1}{2}};q)_\infty}{(q^2;q)_\infty} \sum_{n=0}^\infty \frac{1+q^{n+\frac{1}{2}}}{1+q^{\frac{1}{2}}}q^{\frac{1}{2}n^2}\alpha_n^{(2k)}(q;q) \quad\text{(by \eqref{S4-aq-infinite})}\nonumber \\
     &=q^{\frac{2k-1}{8}}\frac{(-q^{\frac{1}{2}};q)_\infty}{(q^2;q)_\infty} \sum_{n=0}^\infty q^{k(n^2+n)}\alpha_n^{(1)}(q;q) \nonumber \\
     &=q^{\frac{2k-1}{8}}\frac{(-q^{\frac{1}{2}};q)_\infty}{(q;q)_\infty}\Big(\sum_{m=0}^\infty (-1)^mq^{(4k+3)m^2+(2k+\frac{1}{2})m}(1+q^{2m+\frac{1}{2}})\nonumber \\
     &\qquad +\sum_{m=-\infty}^{-1} (-1)^mq^{(4k+3)m^2+(2k+\frac{1}{2})m}(1+q^{2m+\frac{1}{2}})\Big)\nonumber \\
     &=q^{\frac{2k-1}{8}}\frac{(-q^{\frac{1}{2}};q)_\infty}{(q;q)_\infty}\Big(\sum_{m=-\infty}^\infty (-1)^mq^{(4k+3)m^2+(2k+\frac{1}{2})m}\nonumber\\
     &\quad\quad\quad\quad\quad\quad\quad\quad+q^{\frac{1}{2}}\sum_{m=-\infty}^\infty (-1)^mq^{(4k+3)m^2+(2k+\frac{5}{2})m}\Big) \nonumber \\
     &=q^{\frac{2k-1}{8}}\frac{(-q^{\frac{1}{2}};q)_\infty}{(q;q)_\infty}\Big((q^{2k+\frac{5}{2}},q^{6k+\frac{7}{2}},q^{8k+6};q^{8k+6})_\infty\nonumber\\
     &\quad\quad\quad\quad\quad\quad\quad\quad+q^{\frac{1}{2}}(q^{2k+\frac{1}{2}},q^{6k+\frac{11}{2}},q^{8k+6};q^{8k+6})_\infty\Big).
\end{align}
Adding \eqref{proof-thm2-odd-S0} and \eqref{proof-thm2-odd-S1} together, we obtain \eqref{eq-thm2-odd}.
\end{proof}

\subsection*{Acknowledgements}
This work was supported by the National Key R\&D Program of China (Grant No.\ 2024YFA1014500).

\end{document}